\tikzstyle{morphism}=[fill=white, draw=black, shape=rectangle]
\tikzstyle{medium box}=[fill=white, draw=black, shape=rectangle, minimum width=0.7cm, minimum height=0.7cm]
\tikzstyle{large morphism}=[fill=white, draw=black, shape=rectangle, minimum width=1.9cm, minimum height=0.9cm]
\tikzstyle{bn}=[fill=black, draw=black, shape=circle, inner sep=1.5pt]
\tikzstyle{state}=[fill=white, draw=black, regular polygon, regular polygon sides=3, minimum width=0.8cm, shape border rotate=180, inner sep=0pt]
\tikzstyle{medium state}=[fill=white, draw=black, regular polygon, regular polygon sides=3, minimum width=1.3cm, inner sep=0pt, shape border rotate=180]
\tikzstyle{large state}=[fill=white, draw=black, regular polygon, regular polygon sides=3, minimum width=2.2cm, shape border rotate=180, inner sep=0pt]
\tikzstyle{wide state}=[fill=white, draw=black, shape=isosceles triangle, minimum width=0.8cm, shape border rotate=270, inner sep=1.4pt, minimum height=0.5cm, isosceles triangle apex angle=80]
\tikzstyle{very wide state}=[fill=white, draw=black, shape=isosceles triangle, isosceles triangle stretches, minimum width=3cm, shape border rotate=270, inner sep=0pt]
\tikzstyle{wn}=[fill=white, draw=black, shape=circle, inner sep=1.5pt]
\tikzstyle{blue morphism}=[fill=white, draw={rgb,255: red,15; green,0; blue,150}, shape=rectangle, text={rgb,255: red,15; green,0; blue,150}, tikzit category=blue]
\tikzstyle{blue state}=[fill=white, draw={rgb,255: red,15; green,0; blue,150}, shape=circle, regular polygon, regular polygon sides=3, minimum width=0.8cm, shape border rotate=180, inner sep=0pt, text={rgb,255: red,15; green,0; blue,150}, tikzit category=blue]
\tikzstyle{blue node}=[fill={rgb,255: red,15; green,0; blue,150}, draw={rgb,255: red,15; green,0; blue,150}, shape=circle, tikzit category=blue, inner sep=1.5pt]
\tikzstyle{blue}=[text={rgb,255: red,15; green,0; blue,150}, tikzit draw={rgb,255: red,191; green,191; blue,191}, tikzit category=blue, tikzit fill=white, inner sep=0mm]
\tikzstyle{blue wide state}=[fill=white, draw={rgb,255: red,15; green,0; blue,150}, text={rgb,255: red,15; green,0; blue,150}, shape=isosceles triangle, minimum width=0.8cm, shape border rotate=270, inner sep=1.4pt, minimum height=0.5cm, isosceles triangle apex angle=80]
\tikzstyle{red node}=[fill={rgb,255: red,150; green,0; blue,2}, draw={rgb,255: red,150; green,0; blue,2}, shape=circle, inner sep=1.5pt]
\tikzstyle{Purple node}=[fill={rgb,255: red,120; green,0; blue,120}, draw={rgb,255: red,120; green,0; blue,120}, text={rgb,255: red,120; green,0; blue,120}, shape=circle, inner sep=1.5pt]
\tikzstyle{red}=[text={rgb,255: red,150; green,0; blue,2}, inner sep=0mm, tikzit fill=white, tikzit draw={rgb,255: red,191; green,191; blue,191}]
\tikzstyle{purple}=[text={rgb,255: red,150; green,0; blue,150}, inner sep=0mm, tikzit fill=white, tikzit draw={rgb,255: red,191; green,191; blue,191}]
\tikzstyle{white morphism}=[fill=white, draw=white, shape=rectangle, tikzit draw={rgb,255: red,139; green,139; blue,139}]
\tikzstyle{leak morphism}=[fill=white, draw={rgb,255: red,120; green,0; blue,85}, shape=rectangle, text={rgb,255: red,120; green,0; blue,85}, tikzit category=leak]
\tikzstyle{leak}=[text={rgb,255: red,120; green,0; blue,85}, inner sep=0mm, tikzit fill=white, tikzit draw={rgb,255: red,191; green,191; blue,191}, tikzit category=leak]
\tikzstyle{leak node}=[fill={rgb,255: red,120; green,0; blue,85}, draw={rgb,255: red,120; green,0; blue,85}, shape=circle, inner sep=1.5pt, tikzit category=leak]
\tikzstyle{curly brace}=[decorate, decoration=brace]
\tikzstyle{arrow}=[->]
\tikzstyle{dashed box}=[-, dashed]
\tikzstyle{blue line}=[-, draw={rgb,255: red,15; green,0; blue,150}, tikzit category=blue]
\tikzstyle{red arrow}=[-, draw={rgb,255: red,150; green,0; blue,2}, tikzit category=red]
\tikzstyle{purple line}=[draw={rgb,255: red,120; green,0; blue,120}, >=stealth, shorten <=2pt, shorten >=2pt, -]
\tikzstyle{protected purple line}=[draw={rgb,255: red,120; green,0; blue,120}, >=stealth, shorten <=2pt, shorten >=2pt, preaction={line width=1.8pt, white, draw}, -]
\tikzstyle{mapsto}=[{|->}]
\tikzstyle{double wire}=[-, double]
\tikzstyle{protected}=[-, preaction={line width=1.8pt,white,draw}]
\tikzstyle{leak arrow}=[-, line join=round, decorate, decoration={snake, segment length=4, amplitude=0.75, pre=curveto, post=curveto, pre length=1pt, post length=1pt}]
\tikzstyle{protected leak arrow}=[-, line join=round, decorate, decoration={snake, segment length=4, amplitude=0.75, pre=curveto, post=curveto, pre length=1pt, post length=1pt}, preaction={line width=1.8pt, white, draw}]
\tikzstyle{hollow arrow}=[-, very thin, white, preaction={line width=0.7pt,draw={rgb,255: red,120; green,0; blue,85}}, tikzit category=leak, tikzit draw={rgb,255: red,150; green,0; blue,120}]
\tikzstyle{protected hollow arrow}=[-, very thin, white, preaction={line width=0.7pt,draw={rgb,255: red,120; green,0; blue,85},preaction={line width=2.1pt,white,draw}}, tikzit category=leak, tikzit draw={rgb,255: red,150; green,0; blue,120}]
\newcommand*\linenomathpatch[1]{%
  \expandafter\pretocmd\csname #1\endcsname {\linenomath}{}{}%
  \expandafter\pretocmd\csname #1*\endcsname{\linenomath}{}{}%
  \expandafter\apptocmd\csname end#1\endcsname {\endlinenomath}{}{}%
  \expandafter\apptocmd\csname end#1*\endcsname{\endlinenomath}{}{}%
}
\newcommand*\linenomathpatchAMS[1]{%
  \expandafter\pretocmd\csname #1\endcsname {\linenomathAMS}{}{}%
  \expandafter\pretocmd\csname #1*\endcsname{\linenomathAMS}{}{}%
  \expandafter\apptocmd\csname end#1\endcsname {\endlinenomath}{}{}%
  \expandafter\apptocmd\csname end#1*\endcsname{\endlinenomath}{}{}%
}
  \let\linenomathAMS\linenomathWithnumbers
  \patchcmd\linenomathAMS{\advance\postdisplaypenalty\linenopenalty}{}{}{}
  \let\linenomathAMS\linenomathNonumbers
\let\originalleft\left
\let\originalright\right
\renewcommand{\left}{\mathopen{}\mathclose\bgroup\originalleft}
\renewcommand{\right}{\aftergroup\egroup\originalright}
\DeclareFontFamily{OT1}{rsfs}{}
\DeclareFontShape{OT1}{rsfs}{n}{it}{<-> rsfs10}{}
\DeclareMathAlphabet{\mathscr}{OT1}{rsfs}{n}{it}
\newcommand{\newterm}[1]{\textbf{#1}}	
\newtheorem{thm}{Theorem}[section]
\newtheorem{cor}[thm]{Corollary}
\newtheorem{prop}[thm]{Proposition}
\newtheorem{lem}[thm]{Lemma}
\newtheorem{defn}[thm]{Definition}
\newtheorem{notn}[thm]{Notation}
\theoremstyle{definition}
\newtheorem{exmp}[thm]{Example}
\theoremstyle{remark}
\newtheorem{rmk}[thm]{Remark}
\Crefname{prop}{Proposition}{Propositions}
\Crefname{obs}{Observation}{Observations}
\Crefname{lem}{Lemma}{Lemmas}
\Crefname{rmk}{Remark}{Remarks}
\def\cref@thmoptarg[#1]#2#3#4{%
    \ifhmode\unskip\unskip\par\fi%
    \normalfont%
    \trivlist%
    \let\thmheadnl\relax%
    \let\thm@swap\@gobble%
    \thm@notefont{\fontseries\mddefault\upshape}%
    \thm@headpunct{.}
    \thm@headsep 5\p@ plus\p@ minus\p@\relax%
    \thm@space@setup%
    #2
    \@topsep \thm@preskip               
    \@topsepadd \thm@postskip           
    \def\@tempa{#3}\ifx\@empty\@tempa%
      \def\@tempa{\@oparg{\@begintheorem{#4}{}}[]}%
    \else%
      \refstepcounter[#1]{#3}
      \@namedef{cref@#3@alias}{#1}
      \def\@tempa{\@oparg{\@begintheorem{#4}{\csname the#3\endcsname}}[]}%
    \fi%
    \@tempa}%
\def\paragraph{\@startsection{paragraph}{4}%
  \z@\z@{-\fontdimen2\font}%
  {\normalfont\it}}
\renewcommand{\and}{\qquad \text{and} \qquad}
\newcommand{\R}{\mathbb{R}}
\newcommand{\N}{\mathbb{N}}
\renewcommand{\Pr}{\mathbb{P}}
\newcommand{\Ch}[2]{p_{\left[#2\right]}^{#1}}
\newcommand{\filtB}[1]{\mathrm{BF}_{#1}}    
\newcommand{\filtIB}[1]{\mathrm{IBF}_{#1}}  
\newcommand{\filtBP}[1]{\mathrm{BFP}_{#1}}  
\newcommand{\smoothB}[2]{\mathrm{BS}^{#1}_{#2}}    
\newcommand{\smoothIB}[3]{\mathrm{IBS}^{#1}_{#2}\left(\mathbf{#3}_{\left[#1\right]}\right)}    
\newcommand{\filtTrans}[1]{h_{#1}}    
\newcommand{\Dist}{P}				
\newcommand{\samp}[1]{\mathsf{samp}_{#1}}    
\renewcommand{\phi}{\varphi}
\renewcommand{\d}{\mathrm{d}}
\newcommand{\monUnit}{I}
\newcommand{\catC}{\mathcal{C}}
\newcommand{\namedCat}[1]{\mathsf{#1}}
\newcommand{\id}{\mathrm{id}}
\newcommand{\finstoch}{\namedCat{FinStoch}}
\newcommand{\borelstoch}{\namedCat{BorelStoch}}
\newcommand{\finsetmulti}{\namedCat{FinSetMulti}}
\newcommand{\gauss}{\namedCat{Gauss}}
\newcommand{\stoch}{\namedCat{Stoch}}
\DeclareMathOperator{\discard}{\mathrm{del}}
\DeclareMathOperator{\Mcopy}{\mathrm{copy}}
\newcommand{\as}[1]{
		\def\relstate{#1}%
		\ifx\relstate\empty
		  \text{a.s.}%
		\else
		  {#1\text{-a.s.}}%
		\fi
	}
\newcommand{\tNM}[1]{\tau_{#1}} 
\newcommand{\update}[1]{\mathrm{u}_{#1}} 
\numberwithin{equation}{section}
\author[1]{Tobias Fritz}
\author[2]{Andreas Klingler}
\author[3]{Drew McNeely}
\author[1]{\smallskip\\ Areeb Shah-Mohammed}
\author[1]{Yuwen Wang}
\affil[1]{Department of Mathematics, University of Innsbruck, Austria}
\affil[2]{Institute for Theoretical Physics, University of Innsbruck, Austria}
\affil[3]{Cockrell School of Engineering, The University of Texas at Austin, USA}
\title{\vspace{-3cm} Hidden Markov Models and the Bayes Filter\\ in Categorical Probability}
\date{}
\begin{document}

\maketitle
\thispagestyle{empty}

\begin{abstract}
	We use Markov categories to generalize the basic theory of Markov chains and hidden Markov models to an abstract setting. 
	This comprises characterizations of hidden Markov models in terms of conditional independences and algorithms for Bayesian filtering and smoothing applicable in all Markov categories with conditionals.
    	When instantiated in appropriate Markov categories, these algorithms specialize to existing ones such as the Kalman filter, forward-backward algorithm, and the Rauch--Tung--Striebel smoother.
	We also prove that the sequence of outputs of our abstract Bayes filter is itself a Markov chain with a concrete formula for its transition maps.

    	There are two main features of this categorical framework.
	The first is its abstract generality, as manifested in our unified account of hidden Markov models and algorithms for filtering and smoothing in discrete probability, Gaussian probability, measure-theoretic probability, possibilistic nondeterminism and others at the same time.
    	The second feature is the intuitive visual representation of information flow in terms of string diagrams.
\end{abstract}

\tableofcontents

\section{Introduction}

{\let\thefootnote\relax\footnotetext{\textit{Acknowledgments.} We thank Dario Stein for suggesting \Cref{prop:B_n}, Matthew B.\,Smith for help with the forward-backward algorithm, and Ramon van Handel for helpful discussion about non-linear filtering.

This research was funded in part by the Austrian Science Fund (FWF) [doi:\href{https://www.doi.org/10.55776/P35992}{10.55776/P35992}, doi:\href{https://www.doi.org/10.55776/P33122}{10.55776/P33122}, doi:\href{https://www.doi.org/10.55776/P34129}{10.55776/P34129}]. For open access purposes, the authors have applied a CC BY public copyright license to any author accepted manuscript version arising from this submission. AK further acknowledges funding of the Austrian Academy of Sciences (\"OAW) through the DOC scholarship 26547. YW's research is further supported by the University of Innsbruck Early Stage Funding Program.}}

A \newterm{hidden Markov model} is a stochastic dynamical system in discrete time, whose state is considered ``hidden'' (not directly observable), together with a sequence of noisy observations that are functions of the hidden state and each point in time.
Hidden Markov models are widely used across manifold scientific domains to model stochastic dynamical systems in which the hidden state must be approximately inferred from observations that only give partial and/or noisy information.
Making this inference can be a difficult problem in practice: while one would optimally want to use Bayesian updating, actually doing this requires conditioning on a potentially long sequence of observations.
This is problematic as it requires dealing with spaces growing exponentially with the length of the sequence, so that the na\"{\i}ve way of calculating this quickly becomes unworkable.
Recursive \newterm{filters} and \newterm{smoothers} are algorithms that address this difficulty by providing recursion formulas for this Bayesian update and, in some cases, employ suitable approximations that simplify things further.
While we will treat both filters and smoothers in the main text, let us focus on filters in this introduction.

The most widely used filter is the \newterm{Kalman filter}, which assumes the state transitions and observations of the hidden Markov model to be linear maps with additive Gaussian noise.
The most general recursive filter is the \newterm{Bayes filter}, which does not assume any kind of linearity, makes sense for discrete and continuous variables alike, but is difficult to compute with in practice.
But the world of recursive filters has a diverse landscape of variations upon these, each tailored to a different structure within an hidden Markov model or a different approximation scheme.
Some filters are designed to handle nonlinearities approximately~\cite{bar2004estimation,julier2004Unscented}.
Others employ additional structure on the state and observation spaces, such as a metric~\cite{menegaz2019unscented}.
Some may even take on a different representation of probability or information to better reflect the actual prior knowledge of the system state or improve numerical stability~\cite{bierman1982Squareroot}.

The differences in these variations can be a lot to keep track of.
While the basic propagate-update structure is common to all of them, deriving the concrete formulas for a particular filter can be quite cumbersome.
In fact, the equations that define the Kalman filter are already quite involved and difficult to gain intuition for.
It would be helpful to have a unifying framework in which one can understand and reason about filters, and in which the common propagate-update structure can be made formal and precise.
One may then hope that this will provide a more intuitive account of filters which moreover allows one to easily determine the relevant equations in each case.

In this paper, we aim to provide such a unifying mathematical framework in terms of \newterm{categorical probability theory}.
Technically, we develop the theory of Markov chains, hidden Markov models, recursive Bayes filters and smoothers within any \newterm{Markov category with conditionals}.
Instantiating our abstract Bayes filter within any such category gives rise to a concrete filter, and we show that both the Kalman filter and the general Bayes filter arise as special cases of this construction.
We have yet to attempt to obtain other filters in this way, but we expect this to be possible for at least some of them.
Others may require further generalizations to the categorical framework such that approximations can be considered explicitly.

\subsection{Our contributions}


We demonstrate how to implement hidden Markov models, filtering, and smoothing within the framework of categorical probability.
This unifies various filters, including the Kalman filter, the Bayes filter for finite Markov chains used in discrete probability as well as its measure-theoretic generalization, as well as a filter for nondeterministic automata (without probability).
The latter seems to be new and may have applications to information security.
The expressive language underlying our abstract approach also produces non-trivial mathematical results, such as that the sequence of filter outputs is itself a Markov chain (\Cref{prop:ChBMarkov}).


The string diagram language of Markov categories also provides a visually intuitive representation of information flow within systems like hidden Markov models.

\subsection{Organization of the paper}


\Cref{sec:background} introduces relevant background from the relatively new theory of Markov categories.
This should make the paper accessible to anyone familiar with symmetric monoidal categories and string diagrams as well as basic probability.
In particular, we present the definition of Markov category together, with the most pertinent examples being $\finstoch$ for discrete probability, $\gauss$ for Gaussian probability, $\borelstoch$ for measure-theoretic probability and $\finsetmulti$ for possibilistic nondeterminism.
We also recall those concepts from the theory of Markov categories that are relevant to the present paper.
In \Cref{sec:overview}, we then provide a more technical outline of the novel contributions of this paper.
In \Cref{sec:implementation}, we briefly discuss the \texttt{C++} implementation of our formalism  and highlight how the categorical structure can be facilitates modular system design.
In \Cref{sec:related_work}, we discuss related work and the connections to existing literature.

In \Cref{sec:hmm}, we treat hidden Markov models within the framework of Markov categories with conditionals. We begin in \Cref{sec:markov_models_in_a_cat} with \newterm{Markov chains} by generalizing their traditional definition and characterization in terms of Markov properties to the categorical setting.
In \Cref{sec:hidden_markov_models_in_a_cat}, we extend this to \newterm{hidden Markov models}, which we characterize in terms of Markov properties as well (\Cref{prop:MarkovProperties}), and we provide examples in various Markov categories.

In \Cref{sec:BayesFilter}, we develop the categorical formulation of the \newterm{Bayes filter}, which is the result of applying Bayesian inference to a hidden Markov model. 
While the na{\"\i}ve application of Bayesian inference to a sequence of observations amounts to conditioning on the entire sequence in batch, the Bayes filter is a recursive algorithm that computes this conditional recursively (\Cref{prop:B_n}).
We define the \newterm{instantiated Bayes filter} as the result of applying this Bayes filter to a fixed deterministic sequence of observations.
In \Cref{sec:examplesInstantiatedBayesFilter}, we give several examples of the instantiated Bayes filter in various Markov categories.
In $\finstoch$, it recovers the traditional Bayes filter for discrete probability.
In $\finsetmulti$, we obtain a filter for nondeterministic automata, which seems to be new and may have applications to information security.
In \Cref{ssec:BayesFilterRecursion}, we discuss the deterministic counterpart of the Bayes filter that arises when the underlying Markov category is representable, which allows us to formulate the recursion as a single string diagram.
In \Cref{sec:bayes_markov}, we show that the sequence of outputs of the Bayes filter is itself a Markov chain and give a concrete formula for its transition maps.
This illustrates how the expressive language of Markov categories can produce non-trivial mathematical results.

In \Cref{sec:smoothing}, we develop the categorical formulation of the \newterm{Bayes smoother}, which is very similar to the Bayesian filter but addresses the more general problem of inferring the hidden state at any time $t$ from a sequence of observations up to time $n \ge t$.
We give some similar results to those obtained in \Cref{sec:BayesFilter}, notably that there is a \emph{backward in time} recursive construction of the Bayesian smoother called the \newterm{fixed-interval smoother} that also comes in both uninstantiated and instantiated flavors. We also develop a categorical generalization of the classic \newterm{forward-backward algorithm}. We then give two concrete examples of the instantiated Bayes smoother: one is to show how the forward-backward algorithm resolves to its traditional counterpart when applied in $\finstoch$, and the other example demonstrates how the fixed-interval smoother recovers the \newterm{Rauch-Tung-Striebel smoother} when applied to $\gauss$.



\section{Background and Summary}
\subsection{Background on Markov categories}\label{sec:background}

In the following, we summarize the basic notions of Markov categories relevant to this paper. Understanding the results and proofs requires only familiarity with the string diagrammatic calculus of Markov categories, which we sketch below. General introductions to category theory and monoidal categories can be found in \cite{basiccats, startingcats}, while \cite{baez2011rosetta, dodo, coecke2012picturing, piedeleu2025strings} provide overviews of string diagram calculus for monoidal categories in general.
In essence, a monoidal category is a category equipped with a product structure $\otimes$, which assigns a product object $A \otimes B$ to objects $A$ and $B$ and a product morphism $f \otimes g$ to morphisms $f$ and $g$. Symmetric monoidal categories also have well-behaved isomorphisms between $A \otimes B$ and $B \otimes A$. All monoidal categories discussed in this paper are symmetric.

To the best of our knowledge, the only novel contribution in this section is \Cref{prop:ConditioningCoherence}\ref{lem:det_conditional} on conditioning commuting with precomposition by deterministic morphisms.

\begin{notn}
	Throughout, we write
	\[
		\left[n\right] \coloneqq \left\{0,1,\dots,n\right\},
		\qquad \left(t,n\right] \coloneqq \left\{t+1,\dots,n\right\}
	\]
	for all $t,n \in \N$.
\end{notn}

\subsubsection{Markov categories}

The starting point of categorical probability is the notion of a \newterm{Markov category}, which extends the notion of a symmetric monoidal category by certain additional structure as follows.

\begin{defn}[{\cite[Definition 2.1]{fritz2019synthetic}}]
	\label{defn:MarkovCat}
	A \newterm{Markov category}\footnote{The definition was introduced under this name in~\cite[Definition 2.1]{fritz2019synthetic}, but had already appeared earlier as \newterm{affine CD category} in~\cite[Definition~2.3]{chojacobs2019strings}. We refer to~\cite[Remark~2.2]{fritz2022free} for a more detailed account of the history and even earlier closely related definitions.}
	is a semicartesian\footnote{By definition, a monoidal category is semicartesian if the monoidal unit $I$ is terminal.} symmetric monoidal category $\catC$ in which every object $X$ is equipped with a distinguished commutative comonoid structure\footnote{The relevant piece of data here is $\Mcopy_X$, as $\discard_X$ must be the unique morphism to the terminal object $I$.}
	\[
		\Mcopy_X\colon X \to X \otimes X, \qquad \discard_X \colon X \to \monUnit,
	\]
	drawn in string diagrams as
	\[
		\tikzfig{copy} \qquad \text{ and } \qquad \tikzfig{del}
	\]
	which is compatible with the tensor product in the sense that for all objects $X$ and $Y$,
	\[
		\tikzfig{MCatCopyTensorComp}
	\]
\end{defn}

\begin{rmk}
	Let us sketch the intuition behind this definition.
	The objects of a Markov category can be interpreted as spaces of values of a variable, such as the state of possible states of a system or Markov chain.
	In the Markov categories that model probability, these spaces are measurable spaces that do \emph{not} come equipped with a probability measure already.
	A morphism $f \colon A \to X$ can be thought of as a random function or process that takes an input from $A$ and produces an output in $X$ that may be random or uncertain, depicted in string diagrams like this:
	\begin{equation*}
		\tikzfig{single_output_morphism}
	\end{equation*}
	In general, composition of morphisms is depicted by using the output wire of one morphism as the input wire of another morphism.
	We think of a composite $g \circ f$ as taking an input to $f$, producing a random output, and then feeding this to $g$.
	In the various flavors of probability, a morphism $A \to X$ is a conditional probability distribution on $X$ depending on $A$, also known as a \newterm{Markov kernel}.
	These compose via the Chapman--Kolmogorov equation.
	Morphisms of the specific type $p \colon \monUnit \to X$, which effectively have trivial input, are also called \newterm{states} and drawn as triangles:
	\begin{equation*}
		\tikzfig{single_output_probability}
	\end{equation*} 
	In probability, these correspond to probability measures on $X$: the monoidal unit $\monUnit$ is a singleton, and a Markov kernel with singleton input is just a single probability measure.

	The symmetric monoidal structure formalizes the possibility of taking products of spaces, which is relevant for probability for talking about joint distributions.
	The monoidal structure on morphisms amounts to a different mode of composition, often called \newterm{parallel composition}, where $f \colon A \to X$ and $g \colon B \to Y$ compose to
	\[
		f \otimes g \colon A \otimes B \to X \otimes Y,
	\]
	and this is drawn in string diagrams by placing the diagram for $f$ next to the diagram for $g$.
	We interpret this as having the morphisms $f$ and $g$ act independently and possibly concurrently.
	For example in Markov categories modelling probability, the parallel composite of two states is the corresponding product distribution.
	A morphism from an $n$-fold tensor product to an $m$-fold tensor product is thought of as a morphism with $n$ inputs and $m$ outputs, and correspondingly drawn with $n$ input wires and $m$ output wires.

	The comonoid structures implement the idea is that the information ``flowing'' on the wires of a string diagram can be copied or discarded.
	In probability theory, composing with a discard map corresponds to marginalizing a variable, while composing with a copy map corresponds to copying the value of a variable.
	With this interpretation in mind, the equations
	\begin{equation}
		\label{eq:delBox}
		 \tikzfig{DelNatural}
	\end{equation}
	\begin{equation}
		\label{eq:delCopy}
		\tikzfig{copy_del}
	\end{equation}
	which are part of the defining conditions, reflect elementary properties of these operations in probability theory.\footnote{Intuitively, the first reflects the fact that if one marginalizes the output of a Markov kernel, then the kernel itself becomes irrelevant. The second reflects the fact that if a variable gets copied and one of the outputs gets marginalized, then the resulting map acts like an identity.}
\end{rmk}

\begin{exmp}[{See~\cite{fritz2019synthetic} for more details}]\label{exmp:MarkovCats}
  Many different flavors of probability theory can be formalized as Markov categories.
  In all of these examples, we leave it understood that the copy morphisms are the obvious ones.
\begin{enumerate}
	\item $\finstoch$ is a Markov category capturing discrete probability theory on finite sets.
      		It has finite sets as objects and stochastic matrices ${\left( f\left(y\, | \, x\right) \right)}_{x \in X, y \in Y}$ as morphisms, with matrix multiplication as composition.
		In particular, a morphism $p\colon \monUnit \to X$ is a probability distribution on a finite set $X$.
		The symmetric monoidal structure is given by the Cartesian product of sets and the tensor product (Kronecker product) of matrices.
	\item $\stoch$ is the Markov category with arbitrary measurable spaces as objects and measurable Markov kernels as morphisms.
      Composition is defined by the Chapman--Kolmogorov equation $\left(g \circ f\right)\left(T \,|\, a \right) \coloneqq \int_{X} g\left(T \, | \, x\right) \, f\left(\mathrm{d} x \, | \, a \right)$.
		The symmetric monoidal structure is given by the usual product of measurable spaces and the tensor product of Markov kernels.
		This Markov category models general measure-theoretic probability.
		In particular, a morphism $p\colon \monUnit \to X$ is just a probability measure on a measurable space $X$.
	\item $\borelstoch$ is given by $\stoch$ restricted to standard Borel spaces as objects.
		This is the Markov category that we usually use for measure-theoretic probability, since it is better-behaved than $\stoch$, and because standard Borel spaces are sufficient for most applications.
	\item $\gauss$ is the Markov category modelling Gaussian distributions and kernels. Its objects are the Euclidean spaces $\mathbb{R}^n$, morphisms $p\colon \monUnit \to \mathbb{R}^n$ are Gaussian probability measures, and general morphisms $f\colon \mathbb{R}^n \to \mathbb{R}^m$ are triples $(A, \mu, \Sigma)$ consisting of a real $m \times n$ matrix $A$, a vector $\mu \in \mathbb{R}^m$, and a positive definite $m \times m$ matrix $\Sigma$.
		Such a triple represents a stochastic map of the form
	\begin{equation*}
	 x \longmapsto A x + \mathcal{N}\left(\mu, \Sigma\right),
	\end{equation*} 
	where $\mathcal{N}\left(\mu, \Sigma\right)$ denotes a Gaussian random variable with mean $\mu$ and covariance $\Sigma$.
	Two such maps
	\[
		x \mapsto Ax + \mathcal{N}\left(\mu, \Sigma\right), \qquad y \mapsto By + \mathcal{N}\left(\nu, \Lambda\right)
	\]
	compose to
	\begin{align}
	\begin{split}
		\label{eq:gauss_composition}
		x \mapsto{} &{} \left(BA\right)x + B\mathcal{N}\left(\mu, \Sigma\right) + \mathcal{N}\left(\nu, \Lambda\right) \\
			  & = \left(BA\right)x + \mathcal{N}\left(B\mu + \nu, B \Sigma B^t + \Lambda\right),
	\end{split}
	\end{align}
	where the last equation holds as the two noise terms are assumed independent.
	For details on the symmetric monoidal structure, we refer to~\cite{fritz2019synthetic}.
  \item Besides Markov categories modeling probability distributions, there are also \emph{non-probabilistic} Markov categories.
	  For example, every cartesian monoidal category is a Markov category in which every morphism is deterministic (\Cref{defn:DetMor}).
	  In these examples, categorical probability essentially trivializes~\cite[Remark~2.4]{fritz2019synthetic}.
\item\label{exmp:MarkovCatsFinsetMulti}
	The Markov category $\finsetmulti$ models nondeterminism in the computer science sense, where the uncertainty is \emph{possibilistic} rather than probabilistic.
	It has finite sets as objects, and a morphism $f\colon X \to Y$ is given by a multivalued map, by which we mean a matrix ${\left(f\left(y \, | \, x\right)\right)}_{y \in Y, x \in X}$ with entries in $\left\{0,1\right\}$, subject to the condition that for every $x$ we have $f\left(y\, | \, x\right) = 1$ for some $y$.
	We interpret the case $f\left(y \, | \, x\right) = 1$ as saying that output $y$ is possible on input $x$, and impossible if $f\left(y \, | \, x\right) = 0$.
	Composition is given again by matrix multiplication as in $\finstoch$, but with the convention that $1 + 1 = 1$.
	Thus an output of a composite $g \circ f$ is possible on a given input if and only if there is an intermediate output $y$ which is possible on the input $x$ according to $f$, and the final output $z$ is possible on $y$ according to $g$.
	The symmetric monoidal structure is again the obvious one~\cite{fritz2019synthetic}.
\end{enumerate}
\end{exmp}

\subsubsection{Representability}

\begin{defn}[{\cite[Definition~10.1]{fritz2019synthetic}}]\label{defn:DetMor}
	A morphism $f$ in a Markov category $\catC$ is \newterm{deterministic} if it commutes with copying, that is if
  \begin{equation}
    \label{eq:DetMorDefn}
    \tikzfig{DetMorDefn}
  \end{equation}
\end{defn}

The deterministic morphisms form a cartesian monoidal subcategory denoted $\catC_{\det}$~\cite[Remark 10.13]{fritz2019synthetic}.
For example, $\finstoch_{\det}$ is\footnote{What we mean by ``is'' here is that a deterministic stochastic matrix is a matrix with exactly one non-zero entry in each row, and can therefore be identified with a genuine function, and similarly for the other cases.}
the category of finite sets and functions, $\borelstoch_{\det}$ is the category of standard Borel spaces and measurable functions, $\gauss_{\det}$ is the category of Euclidean spaces and affine maps, and $\finsetmulti_{\det}$ is again the category of finite sets and functions.

\begin{defn}[{\cite[Definition~3.10]{fritz2023representable}}]
	\label{defn:Representability}
	A Markov category $\catC$ is \newterm{representable} if for every object $X$, there is an 
	object $\Dist X$ and a morphism 
	$\samp{X} \colon \Dist X \to X$ such that every morphism $f \colon A \to X$ factors as
	\[
		\tikzfig{representability}
	\]
	for a unique deterministic morphism $f^{\sharp}\colon A \to \Dist X$ that we call the \newterm{deterministic counterpart} of $f$.\footnote{Equivalently, $\catC$ is representable if the inclusion functor $\catC_{\det} \rightarrowtail \catC$ has a right adjoint $\Dist\colon \catC \to \catC_{\det}$. In this formulation, the morphisms $\samp{X}$ are the counit components of this adjunction.
	We refer to~\cite{fritz2023representable} for more details e.g.~on the interaction with the monoidal structure.}
\end{defn}

For example, $\borelstoch$ is representable because a Markov kernel $A \to X$ can be identified with a measurable function $A \to \Dist X$, where $\Dist X$ is the measurable space of probability measures on $X$.
This correspondence is implemented by composition with the sampling morphism $\samp{X} \colon \Dist X \to X$, which is the Markov kernel that samples from the distribution that it receives as input.
Similarly, $\finsetmulti$ is representable because a multivalued map $A \to X$ can be identified with a function $A \to \Dist X$, where $\Dist X$ is the set of non-empty subsets of $X$.
On the other hand, $\finstoch$ is not representable.

\begin{defn}[{\cite[Definition~13.1]{fritz2023representable}\footnote{See also~\cite[Definition~5.1]{chojacobs2019strings} for an earlier version of the definition.}}]
	\label{defn:ASEquality}
	For morphisms $p \colon W \to A$ and $f,g \colon A \to X$ in a Markov category, the \newterm{almost sure equality} $f =_{\as{p}} g$ is shorthand notation for
	\[
		\tikzfig{ASEDefnEqn}
	\]
\end{defn}

Intuitively, this is thought of as $f$ and $g$ behaving the same on the support of $p$, where $p$ is itself thought of as a state depending on a parameter $A$.
In $\borelstoch$, it specializes to the usual notion of almost sure equality~\cite[Example~13.3]{fritz2019synthetic}, and therefore the same holds for its subcategories $\finstoch$ and $\gauss$.

\begin{defn}[{\cite[Definition 3.18]{fritz2023representable}}]
	\label{defn:ASCompRep}
	A representable Markov category $\catC$ is \newterm{\as{}-compatibly representable} if for all $p\colon W \to A$ and $f,g\colon A \to X$,\footnote{\label{footnote:ASCompGenImp}The converse of this implication holds for representable Markov categories in general, as can be seen by post-composing with the relevant sampling map.}
	\begin{equation}
		\label{eq:ASCompRep}
		f =_{\as{p}} g \quad \implies \quad f^\sharp =_{\as{p}} g^\sharp.
	\end{equation}
\end{defn}

Most representable Markov categories that we know of are also $\as{}$-compatibly representable, including $\borelstoch$ and $\finsetmulti$~\cite{fritz2023representable}.


\subsubsection{Conditionals}

Many Markov categories satisfy additional axioms that are relevant for categorical probability theory.
The existence of conditionals in particular is a key feature, and we refer to~\cite[Example 11.3]{fritz2019synthetic} and~\cite[Definition 3.5]{chojacobs2019strings} for more details.

\begin{defn}[{\cite[Definition 11.5]{fritz2019synthetic}}]\label{defn:Conditionals}
	A morphism $f \colon A \to X \otimes Y$ in a Markov category $\catC$ has a \newterm{conditional} $f_{|X} \colon A \otimes X \to Y$ with respect to $X$ if it factorizes as\footnote{There is an obvious mirror-image definition of $f_{X|}$. Since these two definitions are interchangeable by a swap, and whichever one is used should be obvious in context throughout the paper, we only treat one of them explicitly.}
\begin{equation}\label{eq:ConditionalDefn}
\tikzfig{def_conditional}
\end{equation}
  We say that $\catC$ \newterm{has conditionals} if such a conditional always exists.
\end{defn}

\begin{exmp}\label{exmps:CatsWithConditionals}
	The main examples from~\cref{exmp:MarkovCats} all have conditionals.
  
\begin{enumerate}
	\item\label{cond:FinStoch}
		$\finstoch$ has conditionals given by
		\begin{equation}
			\label{eq:FinStochConditional}
			f_{|Y}\left(x \,| \,y, a\right) \coloneqq \frac{f\left(x,y \,|\,a\right)}{\sum_{x'} f\left(x', y \,| \,a\right)}
		\end{equation}
		whenever the denominator is nonzero, and with $f_{|Y}\left(x \,| \,y, a\right)$ an arbitrary distribution on $X$ whenever it is zero.
	\item In $\borelstoch$, a conditional $p_{|Y} \colon Y \to X$ for a probability measure $p \colon \monUnit \to X \otimes Y$ is known as a \newterm{regular conditional probability}.
      $\borelstoch$ has conditionals since such regular conditional probabilities exist, even with measurable dependence on an additional parameter from some space $A$, although the larger category $\stoch$ does not have conditionals~\cite[Example 11.3]{fritz2019synthetic}.
\item\label{cond:Gauss} $\gauss$ has conditionals, and for our upcoming treatment of the Kalman filter it is instructive to work out what they are.\footnote{See also~\cite[Section~6.5]{stein2023gaussian} for a more geometric construction of conditionals in a closely related category of \emph{extended} Gaussians.}
		A general morphism $f\colon A \to X \otimes Y$ can be written as
	\[
		\begin{pmatrix} x \\ y
		\end{pmatrix} = \begin{pmatrix}
		M \\ N
		\end{pmatrix} a + \mathcal{N}\left(v, C\right),
	\]
	where the Gaussian noise has mean $v$ and covariance matrix $C$ of the form
	\[
		v = \begin{pmatrix}
		s \\ t
		\end{pmatrix} \quad \text{ and } \quad C = \begin{pmatrix} C_{\xi\xi} & C_{\xi\eta} \\ C_{\eta\xi} & C_{\eta\eta}\end{pmatrix}.
	\]
	As per~\cite[Example 11.8]{fritz2019synthetic}, a conditional $f_{|Y} \colon A \otimes Y \to X$ is given by
	\[
		x = \left(N - C_{\xi\eta} C_{\eta\eta}^{-} M\right) a + C_{\xi\eta} C_{\eta\eta}^{-} y + \mathcal{N}(s - C_{\xi\eta} C_{\eta\eta}^{-} t, C_{\xi\xi} - C_{\xi\eta} C_{\eta\eta}^{-} C_{\eta\xi}),
	\]
	where $C_{\eta\eta}^{-}$ is the Moore--Penrose pseudoinverse of $C_{\eta\eta}$.
	 \item\label{cond:FinSetMulti} $\finsetmulti$ has conditionals~\cite[Proposition 16]{fritz2022dseparation} given simply by
	 \[
	 	f_{|Y}\left(x \,| \,y,a\right) = f\left(x, y \,| \,a\right).
	\] 
\end{enumerate}
\end{exmp}

\begin{rmk}\label{rmk:ConditionalASUniqueness}
	The conditional of a morphism is often not unique,\footnote{This is indeed already the case in $\finstoch$ due to the arbitrary choice made in the case of zero denominator in~\eqref{eq:FinStochConditional}.} and this problem in particular afflicts the Bayes filter as a particular conditional (\cref{defn:BayesFilt}).
  However, by pre-composing the defining equation~\eqref{eq:ConditionalDefn} with $\Mcopy_A$, it follows that $f_{|Y}$ is unique up to almost sure equality with respect to
	\begin{equation}
		\label{eq:ConditionalASUniqueWRT}
		\tikzfig{ConditionalASUniqueWRT}
  	\end{equation}
	Thanks to this ``almost uniqueness'', we often speak of ``the'' conditional, and hence ``the Bayes filter'', as long as it is ensured that all statements under consideration are valid regardless of the particular choice of conditional.
	This will be the case for all of our uses of conditionals in this paper.

	In an \as{}-compatibly representable Markov category, the deterministic counterpart $f_{\vert Y}^\sharp$ is clearly unique up to the same almost sure equality as well.
	This will become relevant in \Cref{ssec:BayesFilterRecursion}.
\end{rmk}

\begin{notn}\label{notn:ConditionalBentWireNotn}
	Following Jacobs~\cite[Section~7.5/6]{jacobs2021structured}, we denote a conditional also by\footnote{The subscript ``\as{}'' reminds us that conditionals are unique only up to almost sure equality as discussed in \cref{rmk:ConditionalASUniqueness}. Because of this lack of uniqueness, the dashed box cannot be viewed as an algebraic operation on morphisms.}
	\[
		\tikzfig{conditionalBendedWire}
	\]
	In this notation,\footnote{Such notation (dashed boxes and bending wires around) has also been used in the context of conditionals constructed out of ``caps'' and ``normalization'' (e.g.~\cite{tull2023activeinferencestringdiagrams}).
	While this approach leads to identical notation for conditionals, we emphasize that we do not use these notions, since this is generally not how conditionals can be constructed (e.g.~in $\borelstoch$).} the defining equation of the conditional reads as
	\[
	 	\tikzfig{ConditionalDefnEquationBentNotn}
	\]
\end{notn}

The following coherence results for conditionals admit particularly elegant depictions in terms of the dashed box notation~\cite[Exercise 7.5.4]{jacobs2021structured}.
The first two were originally proven as~\cite[Lemmas 11.11-12]{fritz2019synthetic} while the third one is new.

\begin{prop}\label{prop:ConditioningCoherence}
  In any Markov category $\catC$ with conditionals:
  \begin{enumerate}
	  \item\label{cond:CondCoherenceIter} For $f \colon A \to X \otimes Y \otimes Z$, the ``double conditional'' $(f_{|Z})_{|Y}$ is also a conditional $f_{|\left(Z \otimes Y\right)}$,
 		 \[
			\tikzfig{DoubleConditionalCoherence}
		\]
	\item\label{cond:CondCoherencePostComp} For $f \colon A \to X \otimes Y$ and $g \colon W \otimes X \to Z$, we have\footnote{This equation shows in particular that conditioning commutes with marginalization. For $f \colon A \to X \otimes Y \otimes Z$, using the marginalization morphism $g \coloneqq \id_X \otimes \discard_Y$ shows that marginalizing out $Y$ before or after conditioning on $Z$ gives the same morphism up to almost sure equality.}
  		\[
			\tikzfig{ConditioningPostCompositionCoherence}
		\]
	\item\label{lem:det_conditional}
	For $f\colon A \otimes B \to X \otimes Y$ and deterministic\footnote{Assuming\label{footnote:DetChar} $g$ to be deterministic in \ref{lem:det_conditional} is necessary, in the sense that if the conclusion holds for $A = \monUnit$ and $f = \Mcopy_B$, then $g$ is deterministic. Indeed, $\Mcopy_B$ has a conditional with respect to the second output given by $\id_B \otimes \discard_B$.
	Thus, if the equality of \Cref{prop:ConditioningCoherence}\ref{lem:det_conditional} holds for $f \coloneqq \Mcopy_B$, it is the assertion that $g\otimes \discard_B$ is a conditional of $\Mcopy \circ g$. 
  The defining equation of this conditional is precisely that $g$ is deterministic.}
 $g\colon C \to B$, we have
	\[
		\tikzfig{deterministicConditioning}
	\]
\end{enumerate}
\end{prop}

\begin{proof}
	We only prove the new part~\ref{lem:det_conditional} and restrict to $A = \monUnit$, since the general case follows by using $\id_A \otimes g$ in place of $g$.
	In this case we get
	\[
		\tikzfig{conditional_f_circ_g}
	\]
	where we use the definition of the conditional in the first step and the determinism assumption in the second.
	This shows that $f_{|Y} \circ \left(g \otimes \id_Y\right)$ is a conditional of $f \circ g$ with respect to $Y$.
\end{proof}

\subsection{Detailed outline}\label{sec:overview}

Here, we give a brief overview of Bayesian filtering and the main results of this paper.
Traditionally, hidden Markov models are represented as graphical models corresponding to these directed acyclic graphs:
$$
\begin{tikzpicture}
	\draw[fill=gray!20!white] (0,0) circle (0.4cm) node {$X_0$};
	\draw[fill=gray!20!white] (2,0) circle (0.4cm) node {$X_1$};
	\draw[fill=gray!20!white] (4,0) circle (0.4cm) node {$X_2$};
	\draw[fill=gray!20!white] (8,0) circle (0.4cm) node {$X_n$};

	\node at (6,0) {$\cdots$};
	
	\draw[-stealth] (0.5,0) -- (1.5,0);
	\draw[-stealth] (2.5,0) -- (3.5,0);
	\draw (4.5,0) -- (5,0);
	\draw[-stealth] (7,0) -- (7.5,0);
	
	\draw[-stealth] (0,-0.5) -- (0,-1.5);
	\draw[-stealth] (2,-0.5) -- (2,-1.5);
	\draw[-stealth] (4,-0.5) -- (4,-1.5);
	\draw[-stealth] (8,-0.5) -- (8,-1.5);
	
	\draw (0,-2) circle (0.4cm) node {$Y_0$};
	\draw (2,-2) circle (0.4cm) node {$Y_1$};
	\draw (4,-2) circle (0.4cm) node {$Y_2$};
	\draw (8,-2) circle (0.4cm) node {$Y_n$};
	
\end{tikzpicture}
$$
Hence, a hidden Markov model is a particular kind of Bayesian network.
The nodes of this graph are partitioned into ``hidden'' nodes (gray) and ``observed'' nodes (white) carrying the random variables $X_i$ and $Y_i$, respectively.
Each directed arrow indicates a possible causal dependence between these variables.
Formally, this means that a joint probability distribution $p$ is a hidden Markov model if it can be factored as
\[
	p\big(x_0, x_1, \ldots, x_n, y_0, y_1, \ldots, y_n\big) = p\left(x_0\right) \cdot p\left(y_0 \,| \,p_0\right) \cdot \prod_{t=1}^n p\big(y_t \,| \,x_t\big) \cdot  p\big(x_t \,| \,x_{t-1}\big).
\]

Let us turn to the new development of this paper.
In our string-diagrammatic setting, a \newterm{Markov chain} is represented as follows:
\[
	\tikzfig{mc_intro}
\]
Here, each box $f_i$ for $i > 0$ represents the transition kernel which generates an $X_i$-valued random variable as a function of the $X_{i-1}$-valued random variable.
The bending of the wires has no formal meaning in this string diagram, and we use it only in order to be able to draw the diagram horizontally.
Similarly, a \newterm{hidden Markov model} is represented as follows:
\[
	\tikzfig{hmm_intro}
\]
Now, additional $Y_i$-valued variables that depend on the $X_i$-valued ones are introduced, and this dependence is modeled by the morphisms $g_i$.
For Markov categories with conditionals, we show that Markov chains and hidden Markov models can be equivalently characterized by \newterm{Markov properties}, which are certain conditional independence relations among the different variables (\cref{prop:MarkovChainMarkovProps,prop:MarkovProperties}).

To motivate filtering and smoothing, recall that a hidden Markov model often models a situation in which one only has access to a sequence of observations $y_0, \ldots, y_t$, and one wants to guess the hidden state $x_t \in X_t$.
This is the so-called \newterm{filtering problem}.
More formally, we want to find the \newterm{Bayes filter} $\filtB{t}$, which is the conditional of the hidden variable $x_t$ given the observations $y_0, \ldots, y_t$, which by definition must be $\filtB{t}\colon Y_0 \otimes \cdots \otimes Y_t \to X_t$ such that
\[
	\tikzfig{def_bayesFilter}
\]
Here, $p_{[t]}$ is a shorthand notation for the hidden Markov model with all hidden states before time $t$ marginalized out.

Working with this definition of the Bayes filter $\filtB{t}$ in practice is hard since its domain typically grows exponentially in $t$.
This is why \emph{recursive} formulas are used in concrete applications.
In \cref{prop:B_n}, we show that $\filtB{t}$ permits such a recursive formulation in every Markov category with conditionals. However, this is still not so useful in practice, where one does not need to know the Bayes filter on \emph{all} inputs, but only on \emph{one} particular sequence of observations $y_0, \ldots, y_t$ as input.
This is why we also consider the \newterm{instantiated Bayes filter} $\filtIB{t}$, which is the morphism $\monUnit \to X_t$ obtained by plugging in a particular sequence of observations, and for which a recursion formula holds as well (\cref{prop:inst_BF}).
Subsequently, we show that our abstract $\filtIB{t}$ specializes to the Kalman filter in $\gauss$ (\cref{ssec:KalmanFilter}) and to the discrete and continuous Bayes filters in $\finstoch$ and $\borelstoch$ (\cref{ssec:instBayesFilterFinStoch,ssec:instBayesFilterBorelStoch}).
Moreover, instantiating our general construction in $\finsetmulti$ produces the \newterm{possibilistic filter}, which seems to be new.

For a hidden Markov model, we may think of the deterministic counterpart $\filtB{t}^{\sharp}$ as a function which deterministically outputs the observer's posterior distribution at time $t$ as a function of the observations (while $\filtB{t}$ samples from that distribution).
In \cref{prop:ChBMarkov}, we show that in every \as{}-compatibly representable Markov category with conditionals, the deterministic Bayes filters form a Markov chain of their own, in the sense that the posterior at time $n$, considered as a random element of $PX_{n}$, exclusively depends on the previous posterior in $PX_{n-1}$.


A common question that is related to the filtering problem is how to incorporate future measurements into one's guess of a hidden state.
In other words, while the Bayes filter $\filtB{t}$ at time $t$ takes observations $y_0, \ldots, y_t$ as inputs and produces an estimate of the final hidden $x_t$, one may also want to utilize even further observations up to $y_n$ for $n > t$ in order to obtain an improved estimate of $x_t$.
This problem commonly appears in the following scenario: a Bayes filter can be used in real time as a sequence of measurements arrives in order to make a best guess of the current state using available data.
Upon reaching a time $n$, one will have obtained an estimate of the state trajectory sequence $(x_t)_{t \leq n}$, but each estimate $x_t$ will only have utilized the measurements $y_0, \ldots, y_t$ that came before it.
At the time $n$, one may want to ``refine'' this trajectory estimate, and obtain an improved sequence of estimates $(\hat{x}_t)_{t \leq n}$ where each $\hat{x}_t$ now incorporates \emph{all} measurements up to time $n$.
By nature, the $n$th state estimate will not change, that is $x_n = \hat{x}_n$, but state estimates earlier in the trajectory will typically become more accurate through this process.
This is the so-called \newterm{smoothing problem}.
In \Cref{sec:smoothing}, we define a \newterm{Bayes smoother at time $t$ with respect to time $n$} to be a morphism $\smoothB{n}{t} \colon Y_0 \otimes \cdots \otimes Y_n \to X_t$ for $t < n$ that satisfies
\[
	\tikzfig{def_bayesSmoother}
\]
where we now write $p_{[n],t}$ as shorthand for the hidden Markov model up to time $n$ with all $X_i$ \emph{except for} $X_t$ marginalized out.

Similar to $\filtB{t}$, using the definition of the Bayes smoother $\smoothB{n}{t}$ becomes difficult in practice as the marginalization and conditioning procedures become exponentially complex in $n$ and $n-t$.
Therefore, we develop several recursive procedures (\cref{obs:forward-algorithm,obs:backward-algorithm,prop:fixed-interval}) which reduce this complexity to linear.
Again, instantiated versions of these recursions tend to be more useful in practice, so we also introduce the \newterm{instantiated Bayes smoother} with corresponding recursions.
We then show how our general categorical recipes specialize to the forward-backward algorithm in $\finstoch$ (\Cref{ssec:ForwardBackwardFinstoch}) and to the Rauch--Tung--Striebel smoother in $\gauss$ (\Cref{ssec:rts-smoother}).

\subsection{Implementation}
\label{sec:implementation}

In the context of computing, the abstract nature of Markov categories allows any algorithm expressed in terms of string diagrams to be implemented in a reusable way, as it can be instantiated both in a probabilistic and possibilistic context, or generally in any Markov category that may be of interest to the programmer.
As a demonstration, Matvey Soloviev and the last named author have implemented a proof-of-concept codebase \cite{soloviev2024markov} that formalizes Markov categories as C++ \emph{abstract classes} \cite{stroustrupcpp}, which are a mechanism that enables writing programs that operate on any data structure for which a particular set of methods has been implemented. 

Based on this, we then provide an implementation of the instantiated Bayes filter (\Cref{sec:instantiatedBayesFilter}) that mirrors the string-diagrammatic formulas, together with implementations of the categories $\mathsf{FinStoch}$ and $\mathsf{FinSetMulti}$, in which the filter implementation can be used with no change to its code.
To add support for an additional category, the programmer needs only to implement a representation of objects and morphisms in that category, together with methods that implement the Markov category structure as well as conditioning.\footnote{Depending on the category, it may be advantageous to implement some frequently-used operations which can be derived from the basic ones directly, exploiting additional structure in the category for increased performance. Our implementation demonstrates an example of this for the projection morphisms $A_1\otimes \ldots \otimes A_n \rightarrow A_i$, which can be obtained as a tensor product of $n-1$ deletion morphisms with an identity, but has a faster implementation in $\mathsf{FinStoch}$ when the dimensions of the $A_i$ are known.}

\subsection{Related work}
\label{sec:related_work}


There are several different lines of work related to ours, in the sense that they also consider related notions of Markov chains, hidden Markov models, and/or filters in a categorical framework.

\begin{itemize}
	\item A different treatment of filters in categorical probability was developed concurrently and independently by Virgo~\cite{virgo2023Unifilar}; we refer to~\cref{rmk:Virgo} for a discussion of the precise relation.
    \item An approach to active inference for open generative models (of which hidden Markov models are an example, albeit in a different kind of category) was developed concurrently and independently by Tull et al.~\cite{tull2023activeinferencestringdiagrams}; we refer to~\cref{rmk:KST} for a discussion of the precise relation.
	\item A recent line of work by McIver et al.~is concerned with so-called \emph{quantitative information flow} in computer science~\cite{mciver2019monadic,rabehaja2019flow,alvim2020quantitative}.
		This seeks to establish information-theoretic upper bounds on how much certain inputs (such as sensitive secrets) of a process or program may influence certain outputs (such as ones visible to the general public).
		Since the hidden state of the process changes over time, it is natural to formalize this kind of situation in terms of hidden Markov models.
		One then looks for ways to quantify how much information the observed (leaked) values carry about the hidden state.

		These works are related to the present paper insofar as they employ a certain amount of categorical language, although everything takes place explicitly in the category of sets and stochastic matrices (the infinitary version of $\finstoch$).
	\item Panangaden's work on \emph{labelled Markov processes} provides a categorical framework for Markov chains with additional \emph{inputs} (called labels) rather than outputs~\cite{panangaden2009markov}.
		This work places particular emphasis on developing analogues of notions from automata theory (like bisimilarity) to labelled Markov processes.
      \item Recent work of Schauer and van der Meulen in~\cite{schauer2023compositionality} gives a categorical account of the \emph{Backward Filtering Forward Guiding (BFFG)} algorithm for smoothing, previously introduced in~\cite{miderMeulenSchauer2021smoothingOfDiffusions} and further generalized in~\cite{vandermeulen2022automatic}.
        They consider a more general setting of graphical models, of which the causal structure of a hidden Markov model is an example.
        While they do construct the algorithm using a particular Markov category, they do not use the Markov structure explicitly.
	Instead, they provide a categorical treatment in terms of optics (in the sense of~\cite{gavranovic2022spacetime}).
        They then demonstrate that compatibility of the optics construction with parallel and sequential composition allows the algorithm to be formulated as a composition of optics.
\end{itemize}
In contrast to our approach, the latter three lines of work are not concerned with a general categorical formalism, but merely use categorical language to talk about the respective structures in particular Markov categories (roughly $\finstoch$ and $\borelstoch$).
Also the transition kernels and the observation kernels are assumed to be the same at all times, whereas we allow them to vary over time.

\section{Hidden Markov models in Markov categories}
\label{sec:hmm}

In this section, we develop the notions of Markov chains and hidden Markov models in the language of categorical probability, including their characterizations in terms of Markov properties.
Instantiating these results in $\finstoch$ and $\borelstoch$ then recovers the standard discrete theory as well as the general theory of Markov chains on standard Borel spaces.
Instantiating it in $\finsetmulti$ specializes it to a theory of finite nondeterministic automata.

\subsection{Markov chains}
\label{sec:markov_models_in_a_cat}

Classically, a discrete-time Markov chain is defined as a sequence of random variables\footnote{We use lowercase notation for random variables since we exclusively reserve uppercase symbols for objects in Markov categories (typically measurable spaces serving as states spaces or observation spaces).}
$x_0,x_1,\dots, x_n$ taking values in a common (say discrete) \newterm{state space} $X$, such that the sequence satisfies the \newterm{Markov property}
\begin{equation}
	\label{eq:MarkovPropertyDiscrete}
	\Pr\left(x_t = \chi_t \,|\, x_{t-1} = \chi_{t-1}, \dotsc, x_0 = \chi_0\right) \,=\, \Pr\left(x_t = \chi_t \,|\, x_{t-1} = \chi_{t-1} \right)
\end{equation}
for all $1 \leq t \leq n$ and $\chi_0,\dotsc, \chi_n \in X$.\footnote{When assuming a discrete state space, this equation only needs to be considered for those sequences of values for which the left-hand side is well-defined, i.e.~for which $\Pr \left( x_{t-1} = \chi_{t-1}, \dotsc, x_0 = \chi_0 \right) > 0$.} A more abstract way to view this equation is to express it as the conditional independence
\[
	x_t \perp x_{\left[t-2\right]} \:\mid\: x_{t-1} \qquad \forall t = 1, \dotsc, n,
\]
where $x_{\left[t-2\right]}$ denotes the tuple of random variables $\left(x_0, \ldots, x_{t-2}\right)$.
Moreover, such a sequence is called \newterm{time-homogeneous} if the conditionals on the right-hand side of~\eqref{eq:MarkovPropertyDiscrete} are independent of $t$.

A well-known different definition of Markov chain specifies how the joint distribution is generated: one starts with an \newterm{initial distribution} $f_0$ corresponding to the law of $x_0$, and a \newterm{transition kernel} $f \colon X \to X$, in terms of which the joint distribution should be given by
\[
	\Pr \left( x_n = \chi_n, \dotsc, x_0 = \chi_0 \right) \,=\, f_0\left(\chi_0\right) \prod_{i=1}^n f\left(\chi_i \, | \, \chi_{i-1}\right).
\]
One then proves that the $\left(x_t\right)_{t \in \left[n\right]}$ form a time-homogeneous Markov chain if and only if there exist $f_0$ and $f$ such that the joint distribution factorizes like this.

The following result is an equivalence of this type in the setting of Markov categories.
We do not assume homogeneity in time, and we even allow the state space itself to vary in time. As is standard, we also consider two versions of the Markov property, a \emph{local} one as above and a \emph{global} one having additional conditional independences that are implied by the local ones. We refer to \cite{fritz2022dseparation} for background on conditional independence and Markov properties in the language of Markov categories.

\begin{prop}
	\label{prop:MarkovChainMarkovProps}
	Let $\catC$ be a Markov category with conditionals.
	Then the following are equivalent for any $p \colon \monUnit \to \bigotimes_{t \in \left[n\right]} X_t$ in $\catC$:
	\begin{enumerate}[label= (\alph*)]
		\item\label{cond:MarkovChainMarkovPropsLoc} \newterm{Local Markov property:} $p$ displays the conditional independence\footnote{The $t = 1$ case of this condition is trivial because of $X_{[1 - 2]} = X_\emptyset = \monUnit$, so it would be enough to consider $t = 2, \dots, n$ only.}
			\[
				X_{t} \perp X_{\left[t-2\right]} \:\mid\: X_{t-1} \qquad \forall t = 1, \dotsc, n.
			\]
		\item\label{cond:MarkovChainMarkovPropsGlob} \newterm{Global Markov property:} $p$ displays the conditional independence
			\[
				X_R \perp X_T \:\mid\: X_S
			\]
			for all disjoint sets $R, S, T \subseteq \left[n\right]$ such that for every $r \in R$ and $t \in T$ there is $s \in S$ between $r$ and $t$, meaning $r < s < t$ or $r > s > t$.
		\item\label{cond:MarkovChainMarkovPropsComp}
			With the convention $X_{-1} \coloneqq \monUnit$, there is a sequence of morphisms ${\left(f_t\colon X_{t-1} \to X_t\right)}_{t=0}^n$ such that
			\begin{equation}\label{eqn:nonHomogMChExp}
				\tikzfig{p_boxState} \quad = \quad \tikzfig{p_stringDiag}
			\end{equation}
	\end{enumerate}
\end{prop}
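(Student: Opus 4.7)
The plan is to prove the implications in the cycle \ref{cond:MarkovChainMarkovPropsComp}$\Rightarrow$\ref{cond:MarkovChainMarkovPropsGlob}$\Rightarrow$\ref{cond:MarkovChainMarkovPropsLoc}$\Rightarrow$\ref{cond:MarkovChainMarkovPropsComp}.

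The implication \ref{cond:MarkovChainMarkovPropsGlob}$\Rightarrow$\ref{cond:MarkovChainMarkovPropsLoc} is immediate by specializing the global Markov property to $R = \{t\}$, $S = \{t-1\}$ and $T = \left[t-2\right]$: every $t' \in T$ satisfies $t > t-1 > t'$, so the separator $t-1$ lies between $t$ and every element of $T$ as required.

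For \ref{cond:MarkovChainMarkovPropsComp}$\Rightarrow$\ref{cond:MarkovChainMarkovPropsGlob}, I plan to argue diagrammatically. The indices in $S$ cut the chain $f_n \circ \cdots \circ f_0$ into consecutive ``segments'', and the separation hypothesis precisely says that no segment contains indices of both $R$ and $T$. Copying each $X_s$ for $s\in S$ (so that one copy becomes an output and the other is fed into the next segment) lets one decompose $p$ into a tensor of shorter chains, each touching $R\cup S$ alone or $T\cup S$ alone. Grouping these factors into a product of a morphism $X_S \to X_R$ and a morphism $X_S \to X_T$ composed with copies of $X_S$ is exactly the Markov-categorical definition of $X_R \perp X_T \mid X_S$ under $p$.

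The main step is \ref{cond:MarkovChainMarkovPropsLoc}$\Rightarrow$\ref{cond:MarkovChainMarkovPropsComp}, which I intend to handle inductively in $t$. Write $p_{[t]}$ for the marginal of $p$ on $X_0 \otimes \cdots \otimes X_t$ (obtained by discarding the remaining wires). Set $f_0 \coloneqq p_{[0]}$, and suppose inductively that $p_{[t-1]}$ has the chain form of \eqref{eqn:nonHomogMChExp}. Using that $\catC$ has conditionals, pick a conditional $c_t \colon X_{\left[t-1\right]} \to X_t$ of $X_t$ given $X_{\left[t-1\right]}$ in $p_{[t]}$. The local Markov property $X_t \perp X_{\left[t-2\right]} \mid X_{t-1}$, which passes to the marginal $p_{[t]}$, asserts by definition that $c_t$ is $p_{[t-1]}$-a.s.\ equal to a morphism of the form $f_t \circ \pi$, where $\pi \colon X_{\left[t-1\right]} \to X_{t-1}$ discards $X_{\left[t-2\right]}$ and $f_t \colon X_{t-1} \to X_t$ is extracted from this factorization. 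Substituting $f_t \circ \pi$ for $c_t$ in the conditional-factorization of $p_{[t]}$ does not change $p_{[t]}$ by the standard a.s.-equality lemma from Section~\ref{sec:background}, so plugging the inductive chain for $p_{[t-1]}$ into the left input of $f_t$ gives the desired chain presentation of $p_{[t]}$.

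The main obstacle is precisely the last move: translating the local conditional independence into an honest factorization $c_t = f_t \circ \pi$ up to a.s.\ equality, and then justifying that the replacement leaves $p_{[t]}$ intact. This is where existence of conditionals is essential and where the bookkeeping of ``parameters'' versus ``outputs'' in the definition of conditional independence must be carried out carefully; once this is done, the induction and the other two implications are essentially bureaucratic diagrammatic checks.
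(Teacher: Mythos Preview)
Your argument for \ref{cond:MarkovChainMarkovPropsLoc}$\Rightarrow$\ref{cond:MarkovChainMarkovPropsComp} is essentially the paper's: both peel off one transition kernel at a time using the local conditional independence at the top index, and your inductive phrasing is just the paper's downward recursion written forward. The step you flag as the ``main obstacle''---replacing the full conditional $c_t$ by $f_t\circ\pi$---is exactly what \Cref{prop:CondIndEqForms}\ref{cond:CondIndFormOneCond} hands you, so that part is routine. The implication \ref{cond:MarkovChainMarkovPropsGlob}$\Rightarrow$\ref{cond:MarkovChainMarkovPropsLoc} is the same trivial specialization in both accounts.

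The substantive divergence, and the actual gap, is in \ref{cond:MarkovChainMarkovPropsComp}$\Rightarrow$\ref{cond:MarkovChainMarkovPropsGlob}. The paper does not argue directly here but invokes the categorical $d$-separation theorem \cite[Theorem~28]{fritz2022dseparation}: once $p$ has the chain factorization, one checks that removing the $X_s$ wires for $s\in S$ disconnects $R$ from $T$ precisely under the betweenness condition, and the cited theorem converts this into the conditional independence. Your direct approach is in principle viable, but the sketch is not yet a proof. The segments cut out by $S$ are composed \emph{sequentially}, not tensored: each segment's input is the $X_{s_i}$ produced by the previous one, so ``decompose $p$ into a tensor of shorter chains'' is not literally correct. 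Turning this sequential composite into the conditional-independence form requires, for every segment whose internal indices lie to the past of (or between) its bounding $S$-indices, reversing part of the chain via Bayesian inverses of the relevant $f_t$'s. For instance with $n=4$, $R=\{0,4\}$, $T=\{2\}$, $S=\{1,3\}$, neither a morphism $X_S\to X_R$ nor one $X_S\to X_T$ can be assembled from the forward kernels alone. This is precisely where the existence of conditionals does work in this implication---and is the content the $d$-separation theorem packages---but your sketch does not address it.
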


In~\eqref{eqn:nonHomogMChExp}, the bending of the wires has no formal meaning, and we only need them to be able to draw the diagram horizontally.
The statement is a special case of the \newterm{categorical $d$-separation} criterion \cite[Theorem 28]{fritz2022dseparation}, which shows equivalences of this kind for arbitrary causal structures, and we thus omit the proof.

\begin{defn}
	\label{def:MarkovChain}
	When the equivalent conditions of \cref{prop:MarkovChainMarkovProps} hold, we say that $p$ is a \newterm{Markov chain} in $\catC$.
	In this case:
	\begin{enumerate}
		\item The object $X_t$ is called the \newterm{state space at time $t$}.
		\item The morphism $f_0 \colon \monUnit \to X_0$ is the \newterm{initial distribution} of the chain, and the morphisms $\left(f_t \colon X_{t-1} \to X_t\right)_{t=1}^n$ are the \newterm{transition kernels}.\footnote{As with conditionals in general (\Cref{rmk:ConditionalASUniqueness}), the transition kernels are uniquely determined by $p$ up to almost sure equality.}
		\item We call $p$ \newterm{time-homogeneous} if the transition kernels $f_t$ can be taken to be independent of $t$ (which requires in particular the state spaces $X_t$ to be independent of $t$).
	\end{enumerate}
\end{defn}

\begin{rmk}
	Traditionally, a Markov chain is defined over unbounded time as an \emph{infinite} sequence of random variables $\left(x_t\right)_{t \in \mathbb{N}}$, with their joint distribution forming a probability measure on an infinite product space. Extending this to the categorical setting is straightforward in terms of countable Kolmogorov products~\cite{fritzrischel2019zeroone}, but we will not consider this here.
\end{rmk}

\begin{exmp}\label{exmp:MarkovChain}
	We now describe Markov chains in the concrete Markov categories from \cref{exmp:MarkovCats}.
	\begin{enumerate}[label=(\roman*)]
		\item In $\finstoch$, a Markov chain is given by
			\begin{equation}\label{eq:finstochMC}
				p\left(x_0, \ldots, x_n\right) = f_0\left(x_0\right) \cdot \prod_{t=1}^n f_t\left(x_t \,| \,x_{t-1}\right),
			\end{equation}
			where $f_0$ is a discrete probability distribution on $X_0$, and the $f_t$ are stochastic matrices for $t \ge 1$. These $f_t$ are called \newterm{transition matrices} and form the textbook definition of Markov chains for discrete state spaces~\cite[Section 4.1]{seneta2006}.
		\item In $\borelstoch$, a Markov chain extends $\finstoch$ to standard Borel spaces $X_t$, with the joint distribution determined by
		\[
    		p(S_0 \times \dots \times S_n) = \int_{x_0 \in S_0} \dots \int_{x_n \in S_n} f_0(\mathrm{d}x_0) \cdot \prod_{t=1}^n f_t(\mathrm{d}x_t \,|\, x_{t-1})
		\]
		for all measurable $S_t \subseteq X_t$.
		\item In $\gauss$, a Markov chain is defined by a Gaussian vector $x_0 = \mathcal{N}(v_0, Q_0)$ and relations
		\[
			x_t = F_t x_{t-1} + \mathcal{N}(v_t, Q_t)
		\]
		for $t = 1, \ldots, n$, where $F_t$ is a linear map, and $v_t$ and $Q_t$ are means and covariances of independent Gaussian noise terms as in~\Cref{exmp:MarkovCats}.
		\item In $\finsetmulti$, a Markov chain follows \eqref{eq:finstochMC}, but with $f_0 \colon X_0 \to \{0,1\}$ as an initial possibility distribution and transition matrices $f_t \colon X_{t-1} \times X_t \to \{0,1\}$. For each $x_t \in X_t$, there exists at least one $x_{t+1} \in X_{t+1}$ such that $f_{t+1}(x_{t+1} \,|\, x_t) = 1$.
			Thus $f_0$ specifies allowed initial states and $f_t$ defines allowed transitions. The joint distribution $p \colon \monUnit \to \bigotimes_{t=0}^n X_t$ gives the set of possible trajectories $(x_0, \ldots, x_n)$.
		
		A time-homogeneous Markov chain in $\finsetmulti$ is therefore the same thing as a \newterm{nondeterministic finite automaton} with trivial input and at least one outgoing transition per state.
	\end{enumerate}
\end{exmp}

\begin{rmk}\label{rmk:ParamMChain}
	Given a Markov category $\catC$ and an object $W$ of $\catC$, there is a \newterm{parametric Markov category} $\catC_W$ associated to it~\cite[Section 2.2]{fritz2023representable}.
	$\catC_W$ has the same objects as $\catC$, but the morphisms have an extra dependence on a parameter represented by $W$: a morphism $X\to Y$ in $\catC_W$ is a morphism $W \otimes X \to Y$ in $\catC$.\footnote{Equivalently, $\catC_W$ is the co-Kleisli category of the so-called \emph{reader comonad} $W\otimes -$ on $\catC$.}
	It is also known that if $\catC$ has conditionals, then so does $\catC_W$ \cite[Lemma 2.10]{fritz2023representable}.
	A Markov chain in $\catC_W$ looks like
	\begin{equation}
		\label{eqn:Ch_Xn_stringDiagParam}
		\tikzfig{Ch_Xn_stringDiagParam}
	\end{equation}
	This illustrates again that $W$ can be thought of as an ``external'' parameter that is fed into each of the transition kernels.
	Since all of our results apply to parametric Markov categories as well, we automatically obtain the corresponding results with additional (measurable) dependence on a parameter.
\end{rmk}

\subsection{Hidden Markov models}
\label{sec:hidden_markov_models_in_a_cat}

We now consider \newterm{hidden Markov models}, which consist of a Markov chain of hidden states and a sequence of observations $g_t \colon X_t \to Y_t$.
The observations by themselves do typically not form a Markov chain, as they lack the local Markov property: knowing an earlier observation can refine predictions about future observations, even given the present one.

\begin{notn}
	For families of objects $\left(X_t\right)_{t \in \left[n\right]}$ and $\left(Y_t\right)_{t \in \left[n\right]}$, we write:
	\begin{enumerate}
		\item $\left(X \otimes Y\right)_{[t]}$ as shorthand for $\bigotimes_{i \in [t]} \left(X_i \otimes Y_i\right)$.
	\end{enumerate}
	Given a state $p \colon \monUnit \to \left(X \otimes Y\right)_{\left[n\right]}$, we write:
	\begin{enumerate}[resume]
		\item $p_{\left[t\right]}$ for the marginal of $p$ up to time $t$, i.e.~the marginal on $\left(X \otimes Y\right)_{\left[t\right]}$.
		\item $p^X$ for the marginal of $p$ on $X_{\left[n\right]}$, and similarly $p^Y$ for the marginal of $p$ on $Y_{\left[n\right]}$.
	\end{enumerate}
	Both notations can also be combined to $p_{\left[t\right]}^X$ and $p_{\left[t\right]}^Y$.
\end{notn}

\begin{prop}\label{prop:MarkovProperties}
	Let $\catC$ be a Markov category with conditionals with two families of objects $\left(X_t\right)_{t \in \left[n\right]}$ and $\left(Y_t\right)_{t \in \left[n\right]}$.
	Then the following are equivalent for any state $p \colon \monUnit \to \left(X \otimes Y\right)_{\left[n\right]}$:
	\begin{enumerate}[label= (\roman*)]
		\item\label{cond:backward}
			The \newterm{backward Markov property}: 
			\begin{enumerate}[label= (\alph*)]
				\item\label{cond:mark1} $X_t \perp X_{\left[t-2\right]}, Y_{\left[t-1\right]} \,| \,X_{t-1} \qquad \forall t = 1, \dots, n$,
				\item\label{cond:mark2} $Y_t \perp Y_{\left[t-1\right]}, X_{\left[t-1\right]} \,| \,X_t \hspace{36pt} \forall t = 1, \dots, n$.
			\end{enumerate}
		\item\label{cond:local}
			The \newterm{local Markov property}:
			\begin{enumerate}[label= (\alph*)]
				\item\label{cond:localA} $X_t \perp X_{\left[t-2\right]}, Y_{\left[t-1\right]} \,| \,X_{t-1} \qquad \forall t = 1, \dots, n$,
				\item\label{cond:localB} $Y_t \perp X_{\left[n\right] \setminus \left\{t\right\}}, Y_{\left[n\right] \setminus \left\{t\right\}} \,| \,X_t \qquad \forall t = 0, \dots, n$.
			\end{enumerate}
		\item\label{cond:global}
			The \newterm{global Markov property}: for all disjoint sets $R, S, T \subseteq \left[n\right]$ and disjoint sets $U, V, W \subseteq \left[n\right]$ such that
			\[
				\forall r \in R, \, t \in T \;\, \exists s \in S \colon s \text{ is between } r \text{ and } t
			\]
			and
			\[ 
				\forall u \in U, \, w \in W \;\, \exists s \in S \colon s \text{ is non-strictly between } u \text{ and } w
			\]
			we have
			\[
				X_R, Y_U \perp X_T, Y_W \,| \,X_S, Y_V.
			\]
		\item\label{cond:rep}
			There exist a state $f_0 \colon \monUnit \to X_0$ and sequences of morphisms
			\[
				f_t \colon X_{t-1} \to X_t, \qquad g_t \colon X_t \to Y_t, \qquad t = 0, \dots, n
			\]
			such that
			\begin{equation}
				\label{eqn:Ch_XY_stringDiag}
				\tikzfig{p_XY_boxState} \qquad = \qquad \tikzfig{Ch_XY_stringDiag}
			\end{equation}
	\end{enumerate}
\end{prop}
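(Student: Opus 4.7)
My plan is to establish the chain of implications \ref{cond:rep}~$\Longrightarrow$~\ref{cond:global}~$\Longrightarrow$~\ref{cond:local}~$\Longrightarrow$~\ref{cond:backward}~$\Longrightarrow$~\ref{cond:rep}, which closes the loop. This mirrors the strategy used in \cref{prop:MarkovChainMarkovProps} and separates the routine parts (the first three implications) from the main work (the last implication).

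For \ref{cond:rep}~$\Longrightarrow$~\ref{cond:global}, I would invoke the categorical $d$-separation criterion of~\cite[Theorem 28]{fritz2022dseparation} applied to the DAG whose edges are $X_{t-1}\to X_t$ for $t=1,\dots,n$ and $X_t\to Y_t$ for $t=0,\dots,n$, which is precisely the causal structure encoded by the string diagram in~\eqref{eqn:Ch_XY_stringDiag}. The two separation conditions in~\ref{cond:global} exactly characterize when $S\cup V$ blocks every path between $R\cup U$ and $T\cup W$ in this DAG: the strict interlacing for $R,T$ handles paths through the $X$-chain, while the non-strict condition on $U,W$ accounts for the extra $X_u\to Y_u$ and $X_w\to Y_w$ edges that produce the terminal step of each path. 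For \ref{cond:global}~$\Longrightarrow$~\ref{cond:local}, I would specialize~\ref{cond:global} by choosing $R=\{t\}$, $S=\{t-1\}$, $T=[t-2]$, $W=[t-1]$ (with all other sets empty) to obtain~\ref{cond:localA}, and $S=\{t\}$, $U=\{t\}$, $T=W=[n]\setminus\{t\}$ to obtain~\ref{cond:localB}; disjointness and the separation conditions are immediate in both cases. The implication \ref{cond:local}~$\Longrightarrow$~\ref{cond:backward} is then a one-line observation: \ref{cond:mark1} is literally \ref{cond:localA}, and \ref{cond:mark2} follows from \ref{cond:localB} by marginalizing out $X_{(t,n]}\cup Y_{(t,n]}$ from the right-hand side, since conditional independence is preserved under marginalization.

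The substantive work is \ref{cond:backward}~$\Longrightarrow$~\ref{cond:rep}, which I would prove by induction on $n$. The base case $n=0$ is just the statement that any $p\colon\monUnit\to X_0\otimes Y_0$ admits a conditional, i.e.\ factors as $f_0\colon\monUnit\to X_0$ composed with $g_0 = p_{|X_0}\colon X_0\to Y_0$ on one branch of a copy; this is exactly the existence of conditionals in $\catC$. For the inductive step, I would first apply~\ref{cond:mark2} at $t=n$: the independence $Y_n\perp Y_{[n-1]},X_{[n-1]}\,|\,X_n$ means that $p$ factors as the marginal on $X_{[n]},Y_{[n-1]}$ with $Y_n$ produced by some $g_n\colon X_n\to Y_n$ applied to a copy of the $X_n$-wire. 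I would then apply~\ref{cond:mark1} at $t=n$ to the resulting marginal on $X_{[n]},Y_{[n-1]}$: the independence $X_n\perp X_{[n-2]},Y_{[n-1]}\,|\,X_{n-1}$ allows me to peel off a kernel $f_n\colon X_{n-1}\to X_n$ applied to a copy of the $X_{n-1}$-wire, leaving behind the marginal $q\colon\monUnit\to(X\otimes Y)_{[n-1]}$.

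The key point to verify is that $q$ still satisfies the backward Markov property on the truncated timeline $[n-1]$, so that the induction hypothesis applies. This is automatic: the independences in~\ref{cond:mark1} and~\ref{cond:mark2} for $t=1,\dots,n-1$ only involve variables in $X_{[n-1]}\cup Y_{[n-1]}$, so they descend from $p$ to its marginal $q$. Assembling the decomposition of $q$ from the induction hypothesis with the extracted $f_n$ and $g_n$ yields the string-diagrammatic factorization of $p$ claimed in~\eqref{eqn:Ch_XY_stringDiag}. The main obstacle I anticipate is not logical but notational: carefully executing the two successive uses of conditionals as string-diagram rewrites and keeping track of which wires have been copied requires drawing intermediate diagrams, and verifying that the marginal $q$ really does inherit each relevant conditional independence. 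Beyond this, no new categorical machinery is needed beyond what~\cref{prop:MarkovChainMarkovProps} already uses.
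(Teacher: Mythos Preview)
Your proposal is correct and follows essentially the same cycle of implications as the paper's proof: the paper also argues \ref{cond:global}$\Rightarrow$\ref{cond:local}$\Rightarrow$\ref{cond:backward}$\Rightarrow$\ref{cond:rep}$\Rightarrow$\ref{cond:global}, using $d$-separation for the last step and the same two-step extraction of $g_n$ then $f_n$ via \cref{prop:CondIndEqForms}\ref{cond:CondIndFormOneCond} for \ref{cond:backward}$\Rightarrow$\ref{cond:rep}. The only cosmetic difference is that the paper phrases \ref{cond:backward}$\Rightarrow$\ref{cond:rep} as an iterative unwinding (``applying the same procedure to $p_{[n-1]}$ iteratively'') rather than a formal induction on $n$, and it leaves the specific choices of $R,S,T,U,V,W$ for \ref{cond:global}$\Rightarrow$\ref{cond:local} implicit where you spell them out.
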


These Markov properties are rarely considered in the traditional hidden Markov model literature.
The backward Markov property is noted in~\cite{lambert2018filter}, and two special cases of the global Markov property appear in~\cite[Corollary~2.2.5]{cappe2005inference}.
However, we have found no references proving the above equivalences in measure-theoretic probability.
Our approach delivers on this simply by instantiating the proposition in $\borelstoch$.


\begin{proof}
	The equivalence \ref{cond:local} $\Longleftrightarrow$ \ref{cond:global} $\Longleftrightarrow$ \ref{cond:rep} is precisely \cite[Theorem 34]{fritz2022dseparation} applied to the causal structure displayed in~\eqref{eqn:Ch_XY_stringDiag}.
	
	To show that the local Markov property~\ref{cond:local} implies the backward Markov property~\ref{cond:backward}, note that Condition~\ref{cond:mark1} is shared by both. Condition~\ref{cond:mark2} of the backward Markov property follows from Condition~\ref{cond:localB} of the local Markov property by marginalizing out $X_{\left[n\right] \setminus \left[t\right]}$ and $Y_{\left[n\right] \setminus \left[t\right]}$, which preserves conditional independence~\cite[Lemma~12.5(b)]{fritz2019synthetic}.

	The proof that the backward Markov property~\ref{cond:backward} is sufficient for $p$ to have the form~\eqref{eqn:Ch_XY_stringDiag} is analogous to the proof of \cite[Theorem 34]{fritz2022dseparation} (iii) $\Longrightarrow$ (i).
\end{proof}

\begin{defn}\label{defn:HiddenMarkovModel}
	When the equivalent conditions of \cref{prop:MarkovProperties} hold, we say that $p$ is a \newterm{hidden Markov model} in $\catC$.
	In this case:
	\begin{enumerate}
		\item The object $Y_t$ is called the \newterm{observation space at time $t$}.
		\item The morphisms $\left(g_t \colon X_t \to Y_t\right)_{t = 0, \dots, n}$ are the \newterm{observation kernels}.
		\item We call $p$ \newterm{time-homogeneous} if both the transition kernels $f_t$ and the observation kernels $g_t$ can be taken to be independent of $t$ (which requires, in particular, the state spaces $X_t$ and observation spaces $Y_t$ to be independent of $t$).
	\end{enumerate}
\end{defn}

So, most concretely, we can say that a hidden Markov model in $\catC$ is specified by a Markov chain ${\left(f_t \colon X_{t-1} \to X_t\right)}_{t=1}^n$ with initial distribution $f_0 \colon \monUnit \to X_0$ and an additional sequence of morphisms $\left(g_t\colon X_t \to Y_t\right)_{t=0}^n$.

\begin{exmp}\label{exmp:HMM}
We describe hidden Markov models in the Markov categories from \cref{exmp:MarkovCats}.

\begin{enumerate}[label=(\roman*)]
    \item In $\finstoch$, a hidden Markov model is described by
    \begin{equation}\label{eq:finstochHMM}
        p\left(x_0, y_0, \ldots, x_n, y_n\right) = \prod_{t=0}^n f_t\left(x_t \,|\, x_{t-1}\right) \cdot g_t\left(y_t \,|\, x_t\right),
    \end{equation}
    where $x_{-1}$ is the sole element of $X_{-1} \coloneqq \monUnit$, and the $f_t$ and $g_t$ are stochastic matrices. The $g_t$ are also called \newterm{emission probabilities}.
	\item In $\borelstoch$, hidden Markov models generalize those in $\finstoch$, allowing arbitrary standard Borel spaces for states and observations, and arbitrary measurable transition and observation kernels. Despite this generality, \cref{prop:MarkovProperties} applies and proves the characterization in terms of Markov properties in a way that completely avoids measure theory.\footnote{Measure theory is needed only for the proof that $\borelstoch$ is a Markov category with conditionals.}
    \item\label{it:gauss_hmm} In $\gauss$, a hidden Markov model is defined by
    \[
        x_t = A_t x_{t-1} + \mathcal{N}\left(v_t, Q_t\right), \qquad 
        y_t = H_t x_t + \mathcal{N}\left(w_t, R_t\right),
    \]
    where $A_t$ and $H_t$ are matrices, and $x_{-1} = 0$ due to $X_{-1} = \monUnit = \R^0$. Thus, $x_t$ and $y_t$ are affine functions of the respective previous state and current state, with Gaussian noise added. These are the classical Gaussian hidden Markov models~\cite{RoweisSam1999AURo}.

  \item\label{it:finsetmulti_hmm} In $\finsetmulti$, a hidden Markov model extends the nondeterministic finite automaton interpretation of Markov chains from \cref{exmp:MarkovChain}. The maps $g_t \colon X_t \to Y_t$ determine the possible outputs for each state. A sequence $y_0, \ldots, y_n$ is possible if and only if there exists a valid sequence $x_0, \ldots, x_n$ such that each $y_t$ is possible according to $g_t$ with input $x_t$.

    This is closely related to the definition of \newterm{labelled transition system}, where one has a single sequence of morphisms
    \[
        h_t \colon X_{t-1} \to X_t \times Y_t
    \]
    which output a state and observation pair at each step.
    Such a labelled transition system is a hidden Markov model in our sense if these morphisms have the specific form
	\begin{equation}
			\label{eq:generalizedHMM}
			\tikzfig{GeneralizedHMM}
	\end{equation}    
	but not in general.
	It would be possible to generalize our definition of hidden Markov models to match the definition of labelled transition system by combining the $f_t$ and $g_t$ into one morphism $h_t$, as some literature does also in the probabilistic setting~\cite{chiganskyCompleteSolutionBlackwell2010}, but we have chosen to stick to the more commonly encountered notion.
\end{enumerate}
\end{exmp}

\section{The Bayes filter in Markov categories}
\label{sec:BayesFilter}

Throughout this section, we work in a Markov category $\catC$ with conditionals and consider a hidden Markov model in the notation of \Cref{defn:HiddenMarkovModel}.
As mentioned, the idea is to model situations in which a system evolves in a Markovian fashion, but the states of the system are only indirectly observable through noisy observations.
A natural problem to ask now is, given a sequence of observations $\left(y_i\right)_{i=0}^t$ up to a time $t$, with $y_i \in Y_i$, what can we infer about the hidden state $x_t \in X_t$?
This is precisely the problem of filtering.

\subsection{The Bayes filter}

Using Bayesian inference, answering the filtering question amounts to conditioning $x_t$ with respect to the given sequence of observations $y_0, \dotsc, y_t$.
Here is the formulation in our categorical setting.

\begin{defn}\label{defn:BayesFilt}
	For $p \colon \monUnit \to \left(Y \otimes X\right)_{\left[n\right]}$ a hidden Markov model, the \newterm{Bayes filter} at time $t \in \left[n\right]$ is the conditional\footnote{See \cref{notn:ConditionalBentWireNotn} for the dashed box and bent wire notation.} 
	\begin{equation}
		\label{eqn:BayesFilterDefn}
		\tikzfig{FilterDef}
	\end{equation}
\end{defn}

The marginalizations on the right-hand side are precisely those over the state spaces $X_0, \dots, X_{t-1}$.
We can therefore also phrase this definition in the following form: the Bayes filter at time $t$ is any morphism $\filtB{t} \colon Y_{\left[t\right]} \to X_t$ such that
\begin{equation}
	\label{eqn:BayesFilterDefnMarginal}
	\tikzfig{jointDist_Xn_prevYs} \quad = \quad \tikzfig{conditionalityConstraint_RHS}
\end{equation}
By \Cref{rmk:ConditionalASUniqueness}, $\filtB{t}$ is often not strictly unique.
For example in $\finstoch$, on a sequence of outcomes $y_0, \ldots, y_t$ which has probability zero, $\filtB{t}$ is completely arbitrary, which intuitively means that we can make an arbitrary inference about the hidden state.
We nevertheless speak of \emph{the} Bayes filter, as the choice is unique up to $\Ch{Y}{t}$-\as{} equality.

Since the definition of $\filtB{t}$ is difficult to work with in practice due to the space $Y_{\left[t\right]}$ that we condition on having size exponential in $t$ (when it is finite), it is useful to have recursion formulas that unravel this complexity.

\begin{prop}\label{prop:B_n}
	The Bayes filter can be recursively computed through conditionals as\footnote{This formulation was suggested to us by Dario Stein.}
	\begin{equation}\label{eq:Bn_rec_defn}
		\tikzfig{caseBn}
	\end{equation}
	for all $t = 0, \dots, n$, where we take $\filtB{-1} \coloneqq \id_{\monUnit}$.
\end{prop}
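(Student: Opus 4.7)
The plan is to prove the recursion by induction on $t \in \left[n\right]$, verifying at each stage that the morphism on the right-hand side of \eqref{eq:Bn_rec_defn} satisfies the defining equation \eqref{eqn:BayesFilterDefnMarginal} of the Bayes filter, and then concluding by the almost-sure uniqueness of conditionals (\cref{rmk:ConditionalASUniqueness}). The convention $\filtB{-1} = \id_{\monUnit}$ is chosen precisely so that the base case $t = 0$ is treated uniformly with the inductive step.

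For the base case, the recursion collapses to the conditional of $(g_0 \otimes \id_{X_0}) \circ \Mcopy_{X_0} \circ f_0$ on $Y_0$. By the HMM factorization \eqref{eqn:Ch_XY_stringDiag}, this composite is exactly the marginal of $p_{\left[0\right]}$ on $X_0 \otimes Y_0$, so its conditional on $Y_0$ is $\filtB{0}$ by \cref{defn:BayesFilt}.

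For the inductive step, the key move is to peel off the time-$t$ transition and observation from the joint marginal of $p_{\left[t\right]}$ on $X_t \otimes Y_{\left[t\right]}$. Using the factorization from Condition~\ref{cond:rep} of \cref{prop:MarkovProperties} and marginalizing over $X_{\left[t-1\right]}$, this joint takes the shape of the marginal of $p_{\left[t-1\right]}$ on $X_{t-1} \otimes Y_{\left[t-1\right]}$ (supplying an $X_{t-1}$ wire) followed by the ``predict-update'' block $(g_t \otimes \id_{X_t}) \circ \Mcopy_{X_t} \circ f_t$. The inductive hypothesis~\eqref{eqn:BayesFilterDefnMarginal} at time $t-1$ rewrites the first factor as $\Ch{Y}{t-1}$ composed with $\filtB{t-1}$ on a copied $Y_{\left[t-1\right]}$ wire. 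Combining, the joint splits as $\Ch{Y}{t-1}$ paired with the morphism
\[
	\left(g_t \otimes \id_{X_t}\right) \circ \Mcopy_{X_t} \circ f_t \circ \filtB{t-1} \;\colon\; Y_{\left[t-1\right]} \longrightarrow X_t \otimes Y_t.
\]
Since $Y_{\left[t\right]} = Y_{\left[t-1\right]} \otimes Y_t$, conditioning this joint on $Y_{\left[t\right]}$ reduces to conditioning the displayed morphism on $Y_t$ alone, and this is precisely the recursion \eqref{eq:Bn_rec_defn}.

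The main technical obstacle will be the diagrammatic bookkeeping of the $Y_{\left[t-1\right]}$ copies: one set is absorbed into $\filtB{t-1}$ via the inductive hypothesis, while another set survives to carry the outer conditioning on $Y_{\left[t-1\right]}$. Almost-sure uniqueness does the heavy lifting here, since one needs only to verify the defining equation \eqref{eqn:BayesFilterDefnMarginal} at time $t$, not to exhibit a canonical representative of the conditional.
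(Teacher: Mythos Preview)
Your approach is correct and complete in outline, but it differs from the paper's proof in an instructive way.

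The paper does \emph{not} proceed by induction. Instead, it works directly with the dashed-box notation for conditionals: starting from the definition of $\filtB{t}$ as the conditional of the relevant marginal of $p_{\left[t\right]}$, it expands that marginal via the HMM factorization, then applies the coherence results of \cref{prop:ConditioningCoherence}---first~\ref{cond:CondCoherenceIter} to split the conditional on $Y_{\left[t\right]}$ into an inner conditional on $Y_{\left[t-1\right]}$ followed by an outer one on $Y_t$, then~\ref{cond:CondCoherencePostComp} to pull the time-$t$ block $\left(g_t \otimes \id\right) \Mcopy f_t$ outside the inner box. The inner conditional is then recognized as $\filtB{t-1}$ on the nose from its definition, and the argument terminates without any inductive hypothesis.

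Your route sidesteps \cref{prop:ConditioningCoherence} entirely: you verify by hand that any choice of the recursion's right-hand side satisfies the defining equation~\eqref{eqn:BayesFilterDefnMarginal}, using only the HMM factorization, the defining equation of the conditional $h_{|Y_t}$ for $h \coloneqq \left(g_t \otimes \id\right)\Mcopy f_t \filtB{t-1}$, and coassociativity of copy to reconcile the three $Y_{\left[t-1\right]}$ wires. This is more elementary---it never invokes the dashed-box calculus---but pays for it with the ``diagrammatic bookkeeping'' you flag at the end. The paper's argument is cleaner precisely because that bookkeeping has been packaged once and for all into \cref{prop:ConditioningCoherence}; your argument effectively re-proves the relevant special case inline. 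Both are sound, and your final paragraph correctly identifies where the work lies.
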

			
Setting $\filtB{-1} \coloneqq \id_{\monUnit}$ makes sense insofar as before time $0$, there are no observations and no hidden state, so that the Bayes filter at that time takes no input and produces no output, and the first recursion step gives
\begin{equation}
	\label{eq:B0_defn}
	\tikzfig{caseB0}
\end{equation}
This makes $\filtB{0}$ a Bayesian inverse $\left(g_0\right)_{f_0}^\dag$ in the sense of~\cite[Definition 3.5]{chojacobs2019strings}.

\begin{proof}
	Starting with the definition~\eqref{eqn:BayesFilterDefn}, we expand $\Ch{}{t}$ via~\eqref{eqn:Ch_XY_stringDiag} and use \cref{prop:ConditioningCoherence}\ref{cond:CondCoherenceIter} to split it into two conditionals,
	\begin{equation*}
		\tikzfig{BayesFiltDefnExpanded} \quad =_{\as{\Ch{Y}{t}}} \quad \tikzfig{BayesFiltDefnExpandedSplit}
	\end{equation*}
	where we have moved the $Y_t$ input to right to obtain a more convenient way of drawing the diagram.
	An application of \cref{prop:ConditioningCoherence}\ref{cond:CondCoherencePostComp} lets us rewrite the diagram further to
	\begin{equation*}
		=_{\as{\Ch{Y}{t}}}\quad \tikzfig{BayesFiltDefnExpandedPostComp} \quad =_{\as{\Ch{Y}{t}}} \quad \tikzfig{caseBnRHS}
	\end{equation*}
	since the inner conditional is precisely the definition of $\filtB{t-1}$.
	This reasoning is still valid for $t = 0$ if we take all objects and morphisms at time $-1$ to be given by $\monUnit$ and $\id_{\monUnit}$, respectively.
\end{proof}

We will consider instances of the Bayes filter in the usual Markov categories of interest in the subsequent subsections and end here with a few general remarks.

\begin{rmk}\label{rmk:ParamBayesFilt}
	What happens when we instantiate the Bayes filter in a parametric Markov category $\catC_W$, in the style of \Cref{rmk:ParamMChain}?
	This is possible, as our constructions again apply to that case, and we thereby automatically obtain a Bayes filter with parameters.

	The conditionals in a parametric Markov category $\catC_W$ can be constructed as conditionals in $\catC$ of their representatives \cite[Lemma 2.10]{fritz2023representable}.
	Therefore, the Bayes filter of a hidden Markov model in a parametric Markov category is represented simply by the corresponding conditional in $\catC$. 
	This reproduces the intuitive idea that the Bayes filter for a hidden Markov model with measurable dependence on a parameter should be the Bayes filter for each of the parameter values, and it guarantees that this filter can be chosen to have measurable dependence on the parameter.
\end{rmk}

\begin{rmk}\label{rmk:Virgo}
	A treatment of hidden Markov models and Bayesian filtering in the setting of Markov categories can also be found in a recent paper by Virgo~\cite{virgo2023Unifilar} based on \emph{stochastic Mealy machines}.
    This amounts to considering time-homogeneous hidden Markov models but generalized to the form $X \to Y \otimes X$, where the transition kernel and observation kernel is unified into one morphism as in~\eqref{eq:generalizedHMM}, and in addition also allowing \emph{inputs}\footnote{These can be used to extend the framework to Markov decision processes, for instance.}, which we do not consider in this paper.

	Virgo's work adopts a perspective slightly different from ours.
	Fixing an observation space $Y$, he forms a category $\mathsf{Generator}\left(Y\right)$ having generalized hidden Markov models (in his sense) with observation space $Y$ as objects and morphisms those maps between the state spaces which preserve the structure~\cite[Section 2.1]{virgo2023Unifilar}.
	Assuming that the Markov category in question is \emph{strongly representable}, Virgo then extends Bayesian filtering to a functor out of the category $\mathsf{Generator}\left(Y\right)$.
	This can be seen as functorially converting a hidden Markov model into a machine that updates Bayesian priors.
	It is further shown that this Bayesian filtering functor is a right adjoint~\cite[Theorem 2.6]{virgo2023Unifilar}.
	
	While Virgo's hidden Markov models are partly more restrictive (time-homogeneous) and partly more general than ours, his work is complementary to ours insofar as it is concerned with different aspects of the Bayes filter.
    For instance, Virgo does not prove any version of \cref{prop:B_n}, focusing instead on the adjunction of~\cite[Theorem 2.6]{virgo2023Unifilar} and its consequences (such as constructing conjugate priors).
\end{rmk}

\begin{rmk}\label{rmk:KST}
  In the recent preprint~\cite{tull2023activeinferencestringdiagrams}, Tull, Kleiner and Smithe develop the theory of active inference in the framework of CD categories with \emph{caps} and \emph{normalizations}.
  This is done as a part of an overarching programme to develop a categorical approach to predictive processing.
  The primary focus is to describe an approximate updating procedure based on energy minimization, which is done for a wide class of structures called \emph{open generative models}.
  Of these, the \emph{discrete time open generative models} of Section 3.2 are essentially our hidden Markov models.

	However, the authors focus primarily on the category $\finstoch$ and on a single update step, but entertain greater complexity there by considering updating on non-deterministic observations, which makes Bayesian inversion branch into various versions like Jeffrey updating and Pearl updating.
	Thus their work is complementary to ours, as we focus on the recursive computation of the Bayes filter and smoother in the greater generality of all Markov categories with conditionals.
\end{rmk}

\subsection{The instantiated Bayes filter}
\label{sec:instantiatedBayesFilter}

Per \cref{prop:B_n}, the Bayes filter $\filtB{t}$ is given by a conditional involving $\filtB{t-1}$, $f_t$ and $g_t$.
However, this conditional is still too cumbersome to work with in practice, as its domain $Y_{\left[t\right]} = Y_0 \otimes \cdots \otimes Y_{t-1}$ still typically grows exponentially in $t$. For this reason, we introduce the \newterm{instantiated Bayes filter} here, which specializes the Bayes filter to a \emph{fixed} sequence of observations. This simplifies the recursive computation of the conditional (see \cref{prop:inst_BF}) and recovers standard filters like the Kalman filter in the Gaussian case (see \cref{ssec:KalmanFilter}).
The price to pay is that one does not compute the whole filter $\filtB{t}$, but only its value on a particular sequence of inputs.

\begin{defn}
	Given a sequence of deterministic morphisms
	\[
		y_0 \colon \monUnit \to Y_0, \qquad y_1 \colon \monUnit \to Y_1, \qquad \ldots, \qquad y_t \colon \monUnit \to Y_t,
	\]
	which we abbreviate by $\mathbf{y}_{\left[t\right]}$, the \newterm{instantiated Bayes filter} is
	\[
		\tikzfig{instantBayesFilter}
	\]
\end{defn}

The non-uniqueness of $\filtB{t}$ discussed before now has more severe consequences, as the instantiated Bayes filter is not well-defined in general.
We will discuss this further in the case of $\borelstoch$ in \cref{ssec:instBayesFilterBorelStoch}.

We have the following recursive formula for $\filtIB{t}$ analogous to \Cref{prop:B_n}.

\begin{prop}\label{prop:inst_BF}
	The instantiated Bayes filter can be recursively computed through conditionals as
	\begin{equation}
		\label{eq:instantBayesFilterStep}
		\tikzfig{instantBayesFilterStep}
	\end{equation}
	for all $t = 0, \dots, n$, where we take $\filtIB{-1} \coloneqq \id_{\monUnit}$.
\end{prop}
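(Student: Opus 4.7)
The plan is to derive the recursion for $\filtIB{t}$ directly from the recursive formula for the uninstantiated Bayes filter in \cref{prop:B_n} by precomposing both sides with the deterministic observations $\mathbf{y}_{[t]} = (y_0, \dots, y_t)$. By definition $\filtIB{t} = \filtB{t} \circ \mathbf{y}_{[t]}$, so the starting point is to substitute the a.s.-identity~\eqref{eq:Bn_rec_defn} and then push the instantiations into the inner subdiagram.

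First, I would replace $\filtB{t}$ by its expression in~\eqref{eq:Bn_rec_defn} and use that composition with a fixed deterministic morphism preserves the $\as{\Ch{Y}{t}}$-equality \cite[appropriate lemma in~\cref{sec:background}]{}. After precomposition with $\mathbf{y}_{[t]}$, the $Y_{[t-1]}$ inputs feeding into $\filtB{t-1}$ are replaced by $y_0, \dots, y_{t-1}$, while the single $Y_t$ input remains to be fed by $y_t$. By the definition of $\filtIB{t-1}$, the sub-diagram $\filtB{t-1}$ with $y_0, \dots, y_{t-1}$ plugged into its $Y_{[t-1]}$ wires is exactly $\filtIB{t-1}$, which is a state $\monUnit \to X_{t-1}$.

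The key step is to justify that conditioning commutes with substitution of deterministic parameters. In other words, if $h \colon W \otimes A \to X \otimes Y$ is a morphism and $h_{|Y}\colon W \otimes A \otimes Y \to X$ is a conditional, then for any deterministic state $w \colon \monUnit \to W$, the morphism $h_{|Y} \circ (w \otimes \id_{A \otimes Y})$ is a conditional of $h \circ (w \otimes \id_A)$ over $Y$. This is a standard fact that can be read off the defining equation of a conditional by precomposing with the deterministic state $w$ and using the interaction between determinism and copying. Applying this with $W = Y_{[t-1]}$, $A = \monUnit$, and $w = \mathbf{y}_{[t-1]}$ shows that plugging $\mathbf{y}_{[t-1]}$ into the parameter slot of the conditional in~\eqref{eq:Bn_rec_defn} yields a valid conditional of the instantiated joint over the remaining $Y_t$ input, which is exactly the shape of the right-hand side of~\eqref{eq:instantBayesFilterStep}. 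Finally, precomposing with $y_t$ on the $Y_t$ input completes the identification of $\filtIB{t}$ with the displayed formula.

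The main obstacle is navigating the a.s.-equality carefully: \cref{prop:B_n} only pins down $\filtB{t}$ up to $\as{\Ch{Y}{t}}$-equality, so after instantiation at $\mathbf{y}_{[t]}$ there is in principle an ambiguity whenever the chosen observation sequence has trivial marginal probability under $\Ch{Y}{t}$. I would address this by interpreting the claim, as in \cref{rmk:ConstFromASE}, as the assertion that the displayed right-hand side provides a valid choice of $\filtIB{t}$, i.e.\,that both sides are equal for suitable coherent selections of the non-unique conditionals on each side; alternatively, one can interpret the equation within the appropriate parametric Markov category $\catC_{Y_{[t-1]}}$ (as in \cref{rmk:ParamBayesFilt}) where the parameter substitution is covered by the general theory.
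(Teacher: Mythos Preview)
Your proposal is correct and follows essentially the same route as the paper: start from the recursion of \cref{prop:B_n}, precompose with the deterministic observations $\mathbf{y}_{[t]}$, and invoke the fact that conditioning commutes with precomposition by deterministic morphisms (this is exactly \cref{lem:det_conditional}) to pull $\mathbf{y}_{[t-1]}$ inside the dashed box and recognize $\filtIB{t-1}$. Your additional discussion of the a.s.-equality ambiguity is more careful than the paper's own proof, which simply leaves this implicit as already flagged in \cref{rmk:ConstFromASE}.
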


So explicitly, the $t = 0$ case is given by
\begin{equation}
	\label{eq:instantBayesFilterStart}
	\tikzfig{instantBayesFilterStart}
\end{equation}
Each step of the recursion involves a \emph{choice} of conditional, and making such choices throughout the recursion merely results in \emph{one version} of the instantiated Bayes filter.

\begin{proof}
	Plugging in \cref{prop:B_n} implies that
	\[
		\tikzfig{instantBayesFilter_proof}
	\]
	where we have used \Cref{prop:ConditioningCoherence}\ref{lem:det_conditional}\footnote{This is where the determinism assumption on the $y_i$ enters.} together with the definition of $\filtIB{t-1}(\mathbf{y}_{\left[t-1\right]})$
\end{proof}

The formula~\eqref{eq:instantBayesFilterStep} allows us to compute our posterior distribution for the hidden state $x_t$ through the following two steps:
\begin{enumerate}
	\item \newterm{Prediction step:} We use the posterior from our previous guess for the hidden state $x_{t-1}$ and apply the transition kernel $f_t$ in order to predict the next hidden state $x_t$.
	\item \newterm{Update step:} We use the observation $y_t$ and our knowledge of the observation kernel $g_t$ to update our guess for the hidden state $x_t$ by conditioning on the observation at time $t$ being $y_t$.
\end{enumerate}
This is the known procedure for the classical Bayes filter~\cite[Theorem~4.1]{Sarkka}, but now generalized to the categorical setting.

\subsection{Examples of the instantiated Bayes filter}
\label{sec:examplesInstantiatedBayesFilter}

In the following, we show that the instantiated Bayes filter recovers known filters for suitable choices of Markov categories. This includes the discrete Bayes filter in the case of $\finstoch$ as well as the Kalman filter in the case of $\gauss$.

\subsubsection{The instantiated Bayes filter in \texorpdfstring{$\finstoch$}{FinStoch}}
\label{ssec:instBayesFilterFinStoch}

We now consider the instantiated Bayes filter in the Markov category $\finstoch$ explicitly, a procedure sometimes called the \newterm{forward algorithm}\footnote{See \Cref{rmk:inconsistencies}.}.
As in \cref{exmp:HMM}, the hidden Markov model up to time $t \in \left[n\right]$ is a probability distribution on the product of the finite sets $\left(X_i\right)_{i=0}^t$ and $\left(Y_i\right)_{i=0}^t$.
It is ``categorical'' in the sense of these sets being finite.
The sequence of deterministic morphisms $\mathbf{y}_{\left[t\right]}$ is a sequence of observed values $y_0 \in Y_0, \ldots, y_t \in Y_t$.
To then compute the instantiated Bayes filter $\filtIB{t}$ recursively, we consider the prediction and update steps as explained above separately.
The prediction step produces the distribution $q_t \colon \monUnit \to X_t$ given by
\begin{equation}
\label{eq:filter_step_finstoch}
	q_t\left(x_t\right) \coloneqq \sum_{x_{t-1}} f_t\left(x_t \,| \,x_{t-1}\right) \cdot \filtIB{t-1}\left(\mathbf{y}_{\left[t-1\right]}\right)\left(x_{t-1}\right)
\end{equation}
for $t > 0$, and $q_0 = f_0$ to get the recursion going.
Performing the update step on top of that results in
\begin{equation}
	\label{eq:xn_estimate}
	\filtIB{t}\left(\mathbf{y}_{\left[t\right]}\right)\left(x_t\right) = \frac{g_t\left(y_t \,| \,x_t\right) \,q_t\left(x_t\right)}{\sum_{x'_t} g_t\left(y_t \,| \,x'_t\right) \,q_t\left(x'_t\right)}.
\end{equation}
This equation agrees with the Bayes filter in its standard form for discrete variables~\cite[Eq.~(1.25)]{ristic2004Beyond}.
It is worth noting that the right-hand side is well-defined whenever the denominator is nonzero, while $\filtIB{t}\left(\mathbf{y}_{\left[t\right]}\right)$ is an arbitrary distribution whenever the denominator is zero.
This is not an issue in the present discrete case since the denominator is nonzero for all $y_t$ that can occur as part of a sequence of observations with nonzero probability.

\subsubsection{The instantiated Bayes filter in \texorpdfstring{$\borelstoch$}{BorelStoch}}
\label{ssec:instBayesFilterBorelStoch}

This construction immediately generalizes to $\borelstoch$.
So now our $X_i$ and $Y_i$ are standard Borel spaces, and the $f_i\colon X_{i-1} \to X_i$ and $g_i\colon X_i \to Y_i$ are measurable Markov kernels.
Again we have a sequence of observations $y_0 \in Y_0, \dots, y_{t-1} \in Y_{t-1}$.
For $t > 0$, the prediction step constructs a probability measure $q_t \colon \monUnit \to X_t$ given by
\[
	q_t\left(S \right) \coloneqq \int_{x_{t-1} \in X_{t-1}} f_t\left( S \,| \,x_{t-1}\right) \cdot \filtIB{t-1}\left(\mathbf{y}_{\left[t-1\right]}\right)\left(\d x_{t-1}\right),
\]
for every measurable $S \subseteq X_t$, which is the generalization of~\eqref{eq:filter_step_finstoch} in $\borelstoch$, and with initial condition $q_0 \coloneqq f_0$.
The update step is more subtle: every individual observation value $y_t$, or entire observation sequence $\mathbf{y}_{\left[t\right]}$, typically has probability zero, and we run into the issue that conditioning on events of probability zero is ill-defined.\footnote{Compare \Cref{rmk:ConditionalASUniqueness} and the Borel--Kolmogorov paradox~\cite[Section~15.7]{jaynes2003probability}, which illustrates that this lack of uniqueness is not an issue with our formalism but rather an unavoidable complication in measure-theoretic probability quite generally.}
Hence, the instantiated Bayes filter $\filtIB{t}\left(\mathbf{y}_{\left[t\right]}\right)$ is completely arbitrary for every probability zero sequence $\mathbf{y}_{\left[t\right]}$, which is the typical case with continuous variables.
What can be considered more well-defined---as per the uniqueness of conditionals up to almost sure equality---is the \emph{totality} of values $\filtIB{t}\left(\mathbf{y}_{\left[t\right]}\right)$ for all $\mathbf{y}_{\left[t\right]}$, which is our Bayes filter $\filtB{t}$ again.

With this in mind, we can explicate the update step: it is given by the formation of product regular conditional probabilities, which amounts to the implicit equation
\[
	\int_{y_t \in T} \int_{x_t \in X_t} \filtIB{t}\left(\mathbf{y}_{\left[t\right]}\right)\left(S\right) \,g_t\left(\d y_t \,| \,x_t\right) \,q\left(\d x_t\right) = \int_{x_t \in S} g_t\left(T \,| \,x_t\right) \,q\left(\d x_t\right)
\]
for all measurable $S \subseteq X_t$ and $T \subseteq Y_t$.
This follows as an instance of~\cite[Example~11.3]{fritz2019synthetic}, and can be interpreted as follows: both sides represent the probability that the hidden state $x_t$ is in $S$ and that the observation $y_t$ is in $T$, and $\filtIB{t}\left(\mathbf{y}_{\left[t\right]}\right)$ is constructed precisely such that this probability can be calculated as on the left-hand side.

\cite[Proposition 3.1.4]{cappe2005inference} gives a similar filtering formula for hidden Markov models of continuous variables with the additional restriction that the processes $g_t\left(\d y_t \,| \,x_t\right)$ need to be absolutely continuous for all $x_t$ with respect to a single measure. 

\subsubsection{The instantiated Bayes filter in \texorpdfstring{$\finsetmulti$}{FinSetMulti}}
\label{ssec:instBayesFilterSetMulti}

We turn to the \newterm{possibilistic Bayes filter}, by which we mean the Bayes filter in $\finsetmulti$.
This seems to have not been considered before. 

Given a hidden Markov model as in \cref{exmp:HMM}\ref{it:finsetmulti_hmm}, the prediction step for the instantiated Bayes filter again takes the form
\begin{equation}
	\label{eq:filter_pred_finsetmulti}
	q_t\left(x_t\right) \coloneqq \sum_{x_{t-1}} f_t\left(x_t \,| \,x_{t-1}\right) \cdot \filtIB{t-1}\left(\mathbf{y}_{\left[t-1\right]}\right)\left(x_{t-1}\right).
\end{equation}
This now means that $x_t$ is considered possible if and only if $x_{t-1}$ is possible according to our previous estimate of the hidden state $\filtIB{t-1}\left(\mathbf{y}_{\left[t-1\right]}\right)$ and $x_t$ is possible according to the transition kernel $f_t$.
Choosing conditionals as in \Cref{exmps:CatsWithConditionals}\ref{cond:FinSetMulti} and using the general prescription shows that the update step is given simply by
\begin{equation}
	\label{eq:filter_update_finsetmulti}
	\filtIB{t}\left(\mathbf{y}_{\left[t\right]}\right)\left(x_t\right) = g_t\left(y_t \,| \,x_t\right) \cdot q_t\left(x_t\right).
\end{equation}
Since this multiplication amounts to a logical conjunction of possibilities, this now means that the state $x_t$ is considered possible if and only if it was considered possible before and is in addition compatible with the observed $y_t$ according to the observation map $g_t$.

\begin{exmp}
	\label{exmp:filter_finsetmulti}
Let us work through a simple example of a hidden Markov model in $\finsetmulti$ and the possibilistic filter.
Consider first the time-homogeneous Markov chain---or equivalently as a nondeterministic automaton---on a three-element set $X = \{a, b, c\}$ with possible transitions like this:
\begin{center}
\begin{tikzpicture}[->,>= stealth',shorten >=2pt, line width =0.5 pt , node distance =2cm]
\node [circle,draw] (zero) {$a$};
\node [ circle , draw ] (one) [ right of = zero ] {$b$};
\node [ circle , draw ] (two) [ right of = one ] {$c$};
\path (zero) edge [ loop left ] node {$1$} (zero) ;
\path (zero) edge node [ above ] {$1$} (one) ;
\path (one) edge node [ above ]{$1$} (two) ;
\path (two) edge [ loop right ] node {$1$} (two) ;
\end{tikzpicture}
\end{center}
Our transition kernels $f_t$ are all the same for $t > 0$ by time-homogeneity, and we thus omit the index $t$.
In matrix form, the transition kernel $f$ is given by\footnote{So $f(x'|x) = 1$ if it is possible to reach $x'$ from $x$ in one step. This matrix representation is the transpose of transition matrices as conventionally used for Markov chains in discrete probability theory.}
\[
	f = (f(x'|x))_{x,x' \in \{a,b,c\}} = \begin{pmatrix}
	1 & 0 & 0 \\
	1 & 0 & 0 \\
	0 & 1 & 1
\end{pmatrix}.
\]
Suppose that our chain starts at $a$, which means that our initial state is 
$f_0 = [	1, 0, 0 ]^t $.

Let us consider observation morphisms $g_t \colon \{a,b,c\} \to \{\bot,\top\}$ that are also independent of $t$ and given by the matrix
\[
	g = (g(y|x))_{y \in \{\bot,\top\}, x \in \{a,b,c\}} = \begin{pmatrix}
	1 & 1 & 0 \\
	0 & 1 & 1
\end{pmatrix}. 
\]
Thus the hidden state $a$ is always observed as $\bot$, the hidden state $c$ as $\top$, and $b$ as either.
Given the underlying Markov chain, it is clear that the possible sequences of observations are as follows: a sequence of $\bot$'s as long as the hidden state is $a$, possibly followed by an arbitrary sequence of $\bot$ and $\top$ while it is $b$, possibly followed by only $\top$'s if the chain proceeds to its terminal state $c$.

Now, let's compute the instantiated Bayes filter by doing the prediction and update steps.
For time $t = 0$, the prediction coincides with the initial distribution: $q_0 = f_0$.
On $y_0 = \bot$ as the only possible initial observation, the initial update step gives
\[
\filtIB{0}(\bot) =  
\begin{pmatrix}
	g(\bot|a) \, q_0(a) \\
	g(\bot|b) \, q_0(b) \\
	g(\bot|c) \, q_0(c)
\end{pmatrix} = 
\begin{pmatrix}
	1 \\
	0 \\
	0 
\end{pmatrix}.
\]
This tells us what we already knew: the initial state must have been $a$ with certainty.
The other instance $\filtIB{0}(\top)$ depends on the choice of conditional, since $y_0 = \top$ is not possible under the initial state $a$.
Using the choice of conditional~\eqref{eq:filter_update_finsetmulti}, we get $\filtIB{0}(\top) = \filtIB{0}(\bot)$, in line with the fact that the initial observation does not tell us anything new about the hidden state.

Suppose now that our sequence of observations up to $t = 2$ is
$
	\mathbf{y}_{[2]} = (\bot, \bot, \top).
$
At time $t = 1$, the chain can a priori be in state $a$ or $b$.
Indeed using the $\finsetmulti$-arithmetic $1 + 1 = 1$ in the prediction step~\eqref{eq:filter_pred_finsetmulti} gives
\[
q_1 = f \;\; \filtIB{0}(\bot) =  \begin{pmatrix}
	1 & 0 & 0 \\
	1 & 0 & 0 \\
	0 & 1 & 1
\end{pmatrix} 
\begin{pmatrix}
	1 \\
	0 \\
	0 
\end{pmatrix} = \begin{pmatrix}
	1 \\
	1 \\
	0 
\end{pmatrix}.
\]
Applying the update step~\eqref{eq:filter_update_finsetmulti} produces
\[
\filtIB{1}(\bot, \bot) =  
\begin{pmatrix}
	g(\bot|a) \, q_1(a) \\
	g(\bot|b) \, q_1(b) \\
	g(\bot|c) \, q_1(c)
\end{pmatrix} = 
\begin{pmatrix}
	1 \\
	1 \\
	0 
\end{pmatrix}.
\]
So with our second observation having been $y_1 = \bot$, we cannot distinguish between the hidden states $a$ and $b$, but we can be sure that the chain is not in state $c$.

Now at time $t=2$, and before we take the observation $y_2 = \top$ into account, the chain can be in any of the three states: the prediction step~\eqref{eq:filter_pred_finsetmulti} gives
\[
q_2 = f \;\; \filtIB{1}(\bot, \bot) =  \begin{pmatrix}
	1 & 0 & 0 \\
	1 & 0 & 0 \\
	0 & 1 & 1
\end{pmatrix} 
\begin{pmatrix}
	1 \\
	1 \\
	0 
\end{pmatrix} = \begin{pmatrix}
	1 \\
	1 \\
	1 
\end{pmatrix}.
\]
At this point, the third observation $y_2 = \top$ finally gives us nontrivial information:  using the update step~\eqref{eq:filter_update_finsetmulti}, we find
$
\filtIB{2}(\bot, \bot, \top) = (0,1,1)^t,
$
which again is intuitively clear as the observed value $y_2 = \top$ has excluded the hidden state $a$.
\end{exmp}

\subsubsection{The Kalman filter as the instantiated Bayes filter}\label{ssec:KalmanFilter}

The Kalman filter is the special case of the Bayes filter for Gaussian distributions.
It was initially formulated as an optimization problem~\cite{kalman1960filtering, kalman1961filtering} and considered from a Bayesian perspective only later~\cite{ho1964bayesian}.
This re-derivation utilized a version of Bayes' rule for probability density functions: the equations for the prediction and update steps were obtained from the parametrization of the probability densities for multivariate Gaussian distributions.

Here, we show that in the Markov category $\gauss$, our instantiated Bayes filter turns into a mildly generalized version of the Kalman filter.
So as in~\cref{exmp:HMM}\ref{it:gauss_hmm}, the ${\left(x_t\right)}_{t=0}^n$ and ${\left(y_t\right)}_{t=0}^n$ are Gaussian vectors which are related via
\[
	x_{t} = A_t x_{t-1} + \mathcal{N}\left(v_t, Q_t\right), \qquad y_{t} = H_t x_{t} + \mathcal{N}\left(w_t, R_t\right),
\] 
where our notation loosely follows that of~\cite[Section~4.3]{Sarkka}.
In particular, we write $m_t$ and $P_t$ for the mean and covariance matrix of the Gaussian distribution of $\filtIB{t}\left(\mathbf{y}_{\left[t\right]}\right)$, so that
\[
	\filtIB{t}\left(\mathbf{y}_{\left[t\right]}\right) = \mathcal{N}\left(m_t, P_t\right).
\]
We may think of $m_t$ as our best guess of the hidden state given the observations up to time $t$, while the covariance matrix $P_t$ measures our uncertainty in this guess.
We suppress the dependence on $\mathbf{y}_{\left[t\right]}$ from the notation.
The problem solved by the Kalman filter is the calculation of $m_t$ and $P_t$ recursively from $m_{t-1}$ and $P_{t-1}$ together with the new observation $y_t$.

For the prediction step, we simply apply the transition kernel $f_t$ to $\filtIB{t-1}\left(\mathbf{y}_{\left[t-1\right]}\right)$.
By the composition formula in $\gauss$~\eqref{eq:gauss_composition}, this produces a new Gaussian with parameters
\begin{equation}
	\label{eq:gauss_prediction}
	\tilde{m}_t = A_t m_{t-1} + v_t, \qquad \tilde{P}_t = A_t P_{t-1} A_t^t + Q_t,
\end{equation}
where the tildes remind us that this models the prediction obtained from the filter at time $t-1$ before taking the observation $y_t$ into account.
The joint distribution with the expected observation $y_t$, or more formally the right-hand side of~\eqref{eq:instantBayesFilterStep} before the conditioning, can then be calculated to be given by the Gaussian
\[
	\begin{pmatrix}
		x_t \\ y_t
	\end{pmatrix} = \mathcal{N}\left(a_t, C_t\right)
\]
with parameters
\[
	a_t = \begin{pmatrix}
		\tilde{m}_t \\ H_t \tilde{m}_t + w_t
		\end{pmatrix} \qquad \text{ and }
		\qquad
	C_t = \begin{pmatrix}
		\tilde{P}_t & \tilde{P}_t H_t^t \\
		H_t \tilde{P}_t & S_t
		\end{pmatrix},
\]
where
\begin{equation}
	\label{eq:innovationCovariance}
	S_t \coloneqq H_t \tilde{P}_t H_t^t + R_t
\end{equation}
is the so-called \newterm{innovation covariance}.
Now $\filtIB{t}\left(\mathbf{y}_{\left[t\right]}\right)$ is given by the conditional of this Gaussian with respect to $y_t$.
Using the known formulas for conditionals in $\gauss$ from \cref{exmps:CatsWithConditionals}\ref{cond:Gauss} and introducing the \newterm{optimal Kalman gain}\footnote{As in \cref{exmps:CatsWithConditionals}\ref{cond:Gauss}, we write $S_t^{-}$ for the Moore-Penrose pseudoinverse of $S$. In practice $S_t$ will usually be invertible, and then simply $S_t^- = S_t^{-1}$.}
\begin{equation}
	\label{eq:kalmanGain}
	K_t = \tilde{P}_t H_t^t S_{t}^{-}
\end{equation}
as shorthand notation, we obtain that
\[
	\filtIB{t}\left(\mathbf{y}_{\left[t\right]}\right) = \mathcal{N}\left(m_t, P_t\right)
\]
with
\[
	m_t = \tilde{m}_t + K_t \left(y_t - H_t \tilde{m}_t - w_t\right), \qquad 
	P_t = \left(I - K_t H_t\right) \tilde{P}_t.
\]
Together with~\eqref{eq:gauss_prediction},~\eqref{eq:innovationCovariance}, and~\eqref{eq:kalmanGain}, these formulas form a mild generalization of the standard Kalman filter formulas~\cite[Theorem~4.2]{Sarkka}: the latter are recovered as the special case $v_t = w_t = 0$, which amounts to assuming zero displacement and centered Gaussian noise for both the transition kernels and the observation kernels.

There are many other variations on the Kalman filter. 
For example, some Kalman filter variations use different representations of uncertainty to improve robustness to numerical error from matrix inversions in the Bayesian inversion step~\cite{bierman1977factorization}.
Others account for different structures in the state and measurement spaces, such as filters over Riemannian manifolds~\cite{menegaz2019unscented}.
Many variations attempt to handle nonlinear transition kernels and observation maps by approximating their state propagations~\cite{bar2004estimation,julier1997new}.
Since the general predict-update structure is common to all of these, it is plausible that at least some of them can be captured by our current framework by choosing the appropriate Markov category, while others may require further generalizations.

\subsection{The Bayes filter recursion as a single string diagram}
\label{ssec:BayesFilterRecursion}

For a given a sequence of observations as input, our Bayes filter $\filtB{t}$ should be thought of as returning a \emph{sample} from the posterior distribution over the hidden state.
Although that distribution itself is precisely what is specified by $\filtB{t}$, it is not directly accessible as an output if one thinks of $\filtB{t}$ as a process that produces a random output.
This situation changes in a representable Markov category (\cref{defn:Representability}).
There $\filtB{t}$ has a deterministic counterpart $\filtB{t}^{\sharp} \colon Y_{\left[t\right]} \to \Dist X_t$, which can be thought of as the actual function that returns the posterior distribution and which recovers $\filtB{t}$ after composition with $\samp{} \colon \Dist X_t \to X_t$.

Throughout this section, we assume that our Markov category is $\as{}$-compatibly representable (\cref{defn:ASCompRep}) in addition to having conditionals.
By \cref{rmk:ConditionalASUniqueness}, this ensures that the deterministic counterpart $\filtB{t}^{\sharp}$ is well-defined up to $\as{\Ch{Y}{t}}$ equality.
Consider a hidden Markov model in the form of \Cref{prop:MarkovProperties}\ref{cond:rep} as before, our goal is then to rewrite the recursion for the Bayes filter in terms of $\filtB{t}^{\sharp}$.
This will allow us to draw it as a single string diagram.

\begin{notn}\label{notn:TransNUpdate}
	For $t = 0, \dots, n$, let $\update{t}\colon \Dist X_{t-1} \otimes Y_t \to X_t$ be the morphism
	\begin{equation}
		\label{eq:un_def}
		\tikzfig{un_def}
	\end{equation}
\end{notn}

As before, this uses the convention that $X_{-1} = \monUnit$, and thus also $\Dist X_{-1} = \monUnit$, so that $\update{0} = \filtB{0}$.
Here is how the $u_t$ act as update maps for the Bayes filter.

\begin{prop}\label{prop:BayesUpdate}
	We have
	\begin{equation}\label{eqn:BayesUpdateNonDet}
		\tikzfig{BnBoxDiag} \qquad =_{\as{\Ch{Y}{t}}}\qquad \tikzfig{bayesAltCon}
	\end{equation}
	for all $t = 0, \dots, n$.
\end{prop}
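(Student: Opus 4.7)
The plan is to derive the identity directly from the recursion of \cref{prop:B_n}, using the deterministic counterpart $\filtB{t-1}^{\sharp}$ afforded by $\as{}$-compatible representability. First I would invoke the representability equation $\filtB{t-1} =_{\as{\Ch{Y}{t-1}}} \samp{} \circ \filtB{t-1}^{\sharp}$, and lift this \as{}-equality to $\as{\Ch{Y}{t}}$ using that $\Ch{Y}{t-1}$ is a marginal of $\Ch{Y}{t}$. Substituting into the recursion~\eqref{eq:Bn_rec_defn} replaces the occurrence of $\filtB{t-1}$ inside the conditional by $\samp{} \circ \filtB{t-1}^{\sharp}$, so that the diagram whose conditional-over-$Y_t$ we are taking now begins with $\filtB{t-1}^{\sharp}\colon Y_{[t-1]} \to \Dist X_{t-1}$ followed by $\samp{}$, then $f_t$, then copy and $g_t$.

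Next, because $\filtB{t-1}^{\sharp}$ is deterministic, I would apply the coherence lemma for deterministic precomposition in conditionals (\cref{lem:det_conditional} combined with \cref{prop:ConditioningCoherence}\ref{cond:CondCoherencePostComp}) in order to pull $\filtB{t-1}^{\sharp}$ out of the conditional: the conditional of $h \circ (d \otimes \id)$ with respect to a later output, for $d$ deterministic, equals $(d \otimes \id)$ postcomposed with the conditional of $h$. The remaining inner piece is exactly a conditional over $Y_t$ of the morphism $\Dist X_{t-1} \to X_t \otimes Y_t$ obtained by sampling, then applying $f_t$, copying, and composing with $g_t$, which is precisely the definition of $\update{t}$ in \cref{notn:TransNUpdate}. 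Reading this composite back into the string diagram yields the right-hand side of~\eqref{eqn:BayesUpdateNonDet}.

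The base case $t=0$ is simply the observation that $\filtB{-1}^{\sharp}$ and $\filtB{-1}$ are both trivial, so $\update{0} = \filtB{0}$ by the definition of $\update{0}$, matching~\eqref{eq:B0_defn} directly.

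The main obstacle I expect is the careful bookkeeping of the \as{}-equivalence classes. The conditionals appearing in the definitions of $\filtB{t}$, $\filtB{t-1}^{\sharp}$, and $\update{t}$ are only determined up to suitable almost-sure equalities, and pulling a deterministic map out of a conditional has to preserve the intended $\as{\Ch{Y}{t}}$ equivalence class. This is exactly where $\as{}$-compatible representability (\cref{rmk:WhyASCompRep}) does the work: it guarantees that $\filtB{t-1}^{\sharp}$ inherits its uniqueness from that of $\filtB{t-1}$, and that the coherence lemma for deterministic precomposition is itself an \as{}-statement under the appropriate reference distribution. Once this compatibility is in place, the manipulation above becomes a routine rewriting of string diagrams.
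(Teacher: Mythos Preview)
Your approach is essentially the same as the paper's: invoke the recursion of \cref{prop:B_n}, replace $\filtB{t-1}$ by $\samp{} \circ \filtB{t-1}^{\sharp}$, and then use \cref{lem:det_conditional} with the determinism of $\filtB{t-1}^{\sharp}$ to pull it outside the conditional, leaving exactly the definition of $\update{t}$. One small correction: the equation $\filtB{t-1} = \samp{} \circ \filtB{t-1}^{\sharp}$ is an \emph{exact} equality by the definition of representability, not merely an $\as{\Ch{Y}{t-1}}$ equality, so no lifting of reference distributions is needed at that step; relatedly, $\as{}$-compatible representability is not actually used in this proposition (only plain representability and \cref{lem:det_conditional} are), and it first becomes relevant in the subsequent \cref{cor:BayesUpdateDet}.
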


\begin{proof}
	This is a consequence of the equations
	\begin{align*}
		\tikzfig{iteration_BFsharp_proof2} \\
		\tikzfig{iteration_BFsharp_proof3} \qquad\quad
	\end{align*}
	where we have used \cref{prop:B_n} in the first step and \Cref{prop:ConditioningCoherence}\ref{lem:det_conditional} together with the fact that $\filtB{n-1}^{\sharp}$ is deterministic in the third step.
\end{proof}

\begin{cor}\label{cor:BayesUpdateDet}
	The deterministic counterpart of the Bayes filter satisfies the recursion
	\begin{equation}
		\label{eqn:BayesUpdateDet}
		\tikzfig{BnBoxDiagDet} \qquad =_{\as{\Ch{Y}{t}}} \qquad \tikzfig{bayesAltConDet}
	\end{equation}
\end{cor}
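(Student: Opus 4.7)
My plan is to derive the corollary directly from \cref{prop:BayesUpdate} by taking deterministic counterparts on both sides. Since we are working in an $\as{}$-compatibly representable Markov category (see \cref{rmk:WhyASCompRep}), passing to deterministic counterparts preserves $\as{}$-equalities: if $f =_{\as{p}} g$ then $f^{\sharp} =_{\as{p}} g^{\sharp}$, because both $f^\sharp$ and $g^\sharp$ are deterministic morphisms whose composites with $\samp{}$ agree $\as{p}$, and such deterministic lifts are unique up to $\as{p}$ equality in an $\as{}$-compatibly representable Markov category. Applying this observation to the equation of \cref{prop:BayesUpdate} with $p = \Ch{Y}{t}$ gives an $\as{\Ch{Y}{t}}$-equality between $\filtB{t}^{\sharp}$ and the deterministic counterpart of $\update{t} \circ (\filtB{t-1}^{\sharp} \otimes \id_{Y_t})$ (modulo wire routing).

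The second ingredient is that the $\sharp$ operation commutes with precomposition by a deterministic morphism. Concretely, if $h\colon A \to B$ is any morphism with deterministic counterpart $h^{\sharp}\colon A \to PB$ and $d\colon C \to A$ is deterministic, then the composite $h^{\sharp} \circ d$ is again deterministic and satisfies $\samp{} \circ h^{\sharp} \circ d = h \circ d$, so by the (almost-sure) uniqueness of deterministic lifts we have $(h \circ d)^{\sharp} =_{\as{}} h^{\sharp} \circ d$. In our situation $d = \filtB{t-1}^{\sharp} \otimes \id_{Y_t}$ is deterministic since it is a tensor product of deterministic morphisms (the identity, and $\filtB{t-1}^{\sharp}$ by definition), so the deterministic counterpart of the RHS of~\eqref{eqn:BayesUpdateNonDet} is precisely $\update{t}^{\sharp} \circ (\filtB{t-1}^{\sharp} \otimes \id_{Y_t})$, which is what appears in \tikzfig{bayesAltConDet}.

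Combining these two steps yields the claimed $\as{\Ch{Y}{t}}$-equality~\eqref{eqn:BayesUpdateDet}. The main subtle point is bookkeeping the $\as{}$-equality relative to the correct reference distribution: the proposition provides equality $\as{\Ch{Y}{t}}$, and both the sharp-preservation property and the commutation of $\sharp$ with deterministic precomposition preserve this reference, so no delicate change of reference distribution is required. In this sense the corollary is essentially a formal consequence of \cref{prop:BayesUpdate} together with the two general facts about $\sharp$ in $\as{}$-compatibly representable Markov categories, both of which belong to the background material referenced in \cref{sec:background}.
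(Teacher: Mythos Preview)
Your proposal is correct and follows essentially the same route as the paper: show that the right-hand side is deterministic (as a composite of deterministic morphisms) and that post-composing with $\samp{}$ recovers the right-hand side of \eqref{eqn:BayesUpdateNonDet}, then invoke $\as{}$-compatible representability to transfer the $\as{\Ch{Y}{t}}$-equality to the sharps. One small remark: in a representable Markov category the deterministic counterpart is \emph{strictly} unique, so your identity $(h\circ d)^{\sharp} = h^{\sharp}\circ d$ for deterministic $d$ holds on the nose rather than merely $\as{}$; this only simplifies your argument.
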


\begin{proof}
	Both sides of the equation are deterministic, and post-composition with $\samp{}$ shows that they are the deterministic counterparts of the two sides of~\eqref{eqn:BayesUpdateNonDet}.
	Therefore the claim follows by $\as{}$-compatible representability (\cref{defn:ASCompRep}).
\end{proof}

%

Since the recursion of \cref{cor:BayesUpdateDet} is easy to expand, we immediately arrive at the following.

\begin{cor}
	\label{cor:BayesFilterSingleDiagram}
	In any \as{}-compatibly representable Markov category with conditionals, the Bayes filter can be written as the single string diagram
	\[
		\tikzfig{BnBoxDiagDet} \qquad =_{\as{\Ch{Y}{t}}} \qquad \tikzfig{BFasU}
	\]
	which furthermore recovers $\filtB{t}$ itself upon composition with $\samp{}$.
\end{cor}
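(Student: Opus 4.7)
The plan is to prove \Cref{cor:BayesFilterSingleDiagram} by a straightforward induction on $t$, repeatedly applying the one-step recursion of \Cref{cor:BayesUpdateDet} until we reach the base case $\filtB{0}^{\sharp} = \update{0}^{\sharp}$ (which is valid because $X_{-1} = \monUnit$ implies $\Dist X_{-1} = \monUnit$ and thus $\update{0}$ coincides with $\filtB{0}$ by \Cref{notn:TransNUpdate}).

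Concretely, I would proceed as follows. First, I unfold one application of \Cref{cor:BayesUpdateDet} to write
\[
	\filtB{t}^{\sharp} \;=_{\as{\Ch{Y}{t}}}\; \update{t}^{\sharp} \circ \left(\filtB{t-1}^{\sharp} \otimes \id_{Y_t}\right),
\]
where the implicit copy structure on $Y_{[t-1]}$ is used to feed the first $t$ observations into $\filtB{t-1}^{\sharp}$ and the last one directly into $\update{t}^{\sharp}$. Then I substitute the analogous identity for $\filtB{t-1}^{\sharp}$, obtained from \Cref{cor:BayesUpdateDet} with $t-1$ in place of $t$, which is valid up to $\as{\Ch{Y}{t-1}}$ equality. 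Since $\Ch{Y}{t-1}$ is the marginal of $\Ch{Y}{t}$, an $\as{\Ch{Y}{t-1}}$-equation implies the same $\as{\Ch{Y}{t}}$-equation after post-composition and tensoring (this is the standard compatibility of almost-sure equality with marginalization and composition, see \cref{sec:background}), so the substitution is legal under the tighter equivalence. Iterating this $t$ times, at each step replacing $\filtB{k}^{\sharp}$ by $\update{k}^{\sharp} \circ (\filtB{k-1}^{\sharp} \otimes \id_{Y_k})$, produces precisely the fully unfolded cascade of $\update{k}^{\sharp}$ boxes ($k = 0, \dots, t$) depicted in the string diagram on the right-hand side of the claim.

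The induction terminates at $k = 0$, where $\filtB{0}^{\sharp} = \update{0}^{\sharp}$ by the convention $\filtB{-1} = \id_{\monUnit}$, so no further expansion is needed. The final claim, that post-composing this diagram with $\samp{}$ recovers $\filtB{t}$ itself, is then immediate: by definition of an $\as{}$-compatibly representable Markov category (\cref{defn:ASCompRep}) one has $\samp{} \circ f^{\sharp} =_{\as{}} f$ for every morphism $f$, and this applied to $f = \filtB{t}$ together with the above diagrammatic expression for $\filtB{t}^{\sharp}$ gives the result.

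The only real subtlety, and hence the main thing to be careful about, is the bookkeeping of the almost-sure equalities: each use of \Cref{cor:BayesUpdateDet} holds only up to $\as{\Ch{Y}{k}}$ equality for its own $k$, and one needs all these equalities to descend to a single $\as{\Ch{Y}{t}}$ equality of the fully unfolded diagram. This is routine given the coherence of conditioning with marginalization in the representable setting (\Cref{rmk:CondsinASCompRepMCs,rmk:WhyASCompRep}), but it is the one place where the argument is not a purely symbolic rewriting and deserves to be checked explicitly once.
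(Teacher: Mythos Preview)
Your proposal is correct and follows essentially the same approach as the paper, which simply states that the recursion of \Cref{cor:BayesUpdateDet} is ``easy to expand'' and leaves the details implicit; your inductive unfolding together with the a.s.-bookkeeping is exactly that expansion made explicit. One small point: the identity $\samp{} \circ f^{\sharp} = f$ holds \emph{on the nose} in any representable Markov category by the very definition of $f^{\sharp}$ (\Cref{notn:Representability}), not merely almost surely and not as a consequence of a.s.-compatibility, so your final step is even simpler than you indicate.
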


\begin{rmk}\label{rmk:ParamHMMShrp}
	If $\catC$ is $\as{}$-compatibly representable, then so is the parametric Markov category $\catC_W$ for any object $W$~\cite[Lemma~3.25]{fritz2023representable}.
	Further, the deterministic counterpart of a morphism in $\catC_W$ is represented in $\catC$ by the deterministic counterpart of its representative in $\catC$.
	Consequently, as with \cref{rmk:ParamBayesFilt}, the results of this section also apply to $\catC_W$.
	This immediately shows that \cref{cor:BayesFilterSingleDiagram} holds even in case all morphisms involved have an additional dependence on some parameter object $W$.
	This amounts to every box in our diagrams, including the $\update{i}$, connecting to a copy of the $W$ input as in~\eqref{eqn:Ch_Xn_stringDiagParam}.
\end{rmk}

\subsection{The Bayes filter as a Markov chain}
\label{sec:bayes_markov}

Consider a hidden Markov model and the sequence of posterior distributions that describe an observer's guess about the hidden state.
Since the observations are typically random, the posterior distributions are also random.
It is natural to consider them as a stochastic process in their own right.
Here we show that this process is a Markov chain. 
As far as we know, this is due to Blackwell~\cite{blackwell1957markov} for the case of $\finstoch$, while~\cite[Lemma~2.4]{chiganskyCompleteSolutionBlackwell2010} provides a partial measure-theoretic generalization.

Throughout, we work in an $\as{}$-compatibly representable Markov category $\catC$ with conditionals and consider a hidden Markov model and its Bayes filter as before.
In these terms, the stochastic process of posterior distributions up to some time $t \in [n]$ is given schematically by the \newterm{filter process}
\[
	\tikzfig{ChBStateDiag} \qquad \coloneqq \qquad \tikzfig{BayesJointn}
\]
where the dashed wires indicate what the pattern is.
In order to spell this out more formally, we introduce the following auxiliary morphisms.

\begin{defn}\label{defn:lambda}
	Let
	\[
		\lambda_t\colon Y_0\otimes \dots \otimes Y_t \longrightarrow \Dist X_0 \otimes \dots \otimes \Dist X_t
	\]
	be defined recursively, starting with $\lambda_0 \coloneqq \filtB{0}^{\sharp}$, and for $t \ge 1$,
	\[
		\tikzfig{lambdanBox} \qquad \coloneqq \qquad \tikzfig{lambdanCon}
	\]
\end{defn}

As before, we could alternatively consider the start of the recursion to be $\lambda_{-1} \coloneqq \id_{\monUnit}$ with $Y_{-1} = I$ and $\Dist X_{-1} = X_{-1} = \monUnit$.
In terms of these $\lambda_t$'s, the filter process is now formally defined as
\[
	\tikzfig{ChBStateDiag} \qquad \coloneqq \qquad \tikzfig{ChBExp} 
\]

\begin{lem}\label{lem:lambdaBF}
	For all $t \in \left[n\right]$, we have
	\[
		\tikzfig{lambdaBF} \qquad = \qquad \tikzfig{lambdaCopy}
	\]
\end{lem}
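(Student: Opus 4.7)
The plan is to prove this lemma by induction on $t \in [n]$, exploiting the fact that the recursive definition of $\lambda_t$ in \cref{defn:lambda} is set up to mirror exactly the recursive definition of $\filtB{t}^{\sharp}$ given by \cref{cor:BayesUpdateDet}. The statement to be shown asserts, intuitively, that the last output wire of $\lambda_t$ is a deterministic function of the inputs $y_0,\ldots,y_t$ which agrees with $\filtB{t}^{\sharp}$; so we can ``extract'' or copy this last output and identify it with $\filtB{t}^{\sharp}$ applied to the same observation inputs.

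For the base case $t=0$, the defining equation $\lambda_0 \coloneqq \filtB{0}^{\sharp}$ makes both sides of the claimed equality literally identical after noting that $\filtB{0}^{\sharp}$ is deterministic and that copying a deterministic morphism's output is the same as applying the morphism twice to copied inputs. Note this is only an $\as{\Ch{Y}{0}}$-equality, which is what we need given \cref{rmk:WhyASCompRep}.

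For the inductive step, the recursion of \cref{defn:lambda} writes $\lambda_t$ as $\lambda_{t-1}$ composed on its last output with $\update{t}^{\sharp}$ together with the new observation input $y_t$, leaving the previous outputs of $\lambda_{t-1}$ untouched. Applying the induction hypothesis to identify the last output of $\lambda_{t-1}$ with $\filtB{t-1}^{\sharp}(\mathbf{y}_{[t-1]})$, then invoking \cref{cor:BayesUpdateDet} to recognize $\update{t}^{\sharp}$ applied to $(\filtB{t-1}^{\sharp}, y_t)$ as $\filtB{t}^{\sharp}$, will produce the desired equality. Throughout we use that $\update{t}^{\sharp}$ and $\filtB{t}^{\sharp}$ are deterministic so copy operations commute past them via~\eqref{eq:fsharp_det}-style naturality (\cref{lem:det_conditional}).

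The main obstacle I expect is bookkeeping of the almost-sure equality qualifications: \cref{cor:BayesUpdateDet} yields equality only up to $\as{\Ch{Y}{t}}$, while the inductive hypothesis provides equality up to $\as{\Ch{Y}{t-1}}$. I will need to verify that the distribution over which we take $\as{}$-equalities is consistent at each step, which should follow from the fact that $\Ch{Y}{t-1}$ is the marginal of $\Ch{Y}{t}$ combined with standard properties of almost-sure equality under marginalization and post-composition (as used already in the proof of \cref{prop:BayesUpdate}). This is the same sort of coherence maintenance already handled elsewhere in the paper, so I do not anticipate genuine difficulty, only care in writing.
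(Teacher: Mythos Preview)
Your inductive approach matches the paper's, but you have misread the recursive definition of $\lambda_t$ in \cref{defn:lambda}: the recursion appends $\filtB{t}^{\sharp}$, applied to copies of all the observation inputs $y_0,\ldots,y_t$, as the new last output; it does \emph{not} compose $\update{t}^{\sharp}$ onto the last output of $\lambda_{t-1}$. With the correct reading, the lemma is a strict equality, and both the base case and the inductive step reduce immediately to the determinism of $\filtB{t}^{\sharp}$ (copying its output equals applying it to copied inputs). No appeal to \cref{cor:BayesUpdateDet} is required, and the $\as{}$-equality bookkeeping you worry about simply does not arise; in particular, your claim that the base case is ``only an $\as{\Ch{Y}{0}}$-equality'' is already mistaken, since determinism of $\filtB{0}^{\sharp}$ gives a strict equality.

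Your concern about almost-sure qualifications is in fact a symptom of the misreading: were $\lambda_t$ defined via $\update{t}^{\sharp}$ as you describe, the lemma would indeed only hold $\as{\Ch{Y}{t}}$, contradicting its statement as a strict equality. The connection between $\lambda_t$ and $\update{t}^{\sharp}$ is made separately, in the proof of \cref{prop:ChBMarkov}, where \cref{cor:BayesUpdateDet} is invoked.
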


\begin{proof}
	For $t = 0$, this is simply the statement that $\lambda_0 = \filtB{0}^{\sharp}$ is deterministic.
	The induction step follows similarly straightforwardly by the fact that $\filtB{t}^{\sharp}$ is deterministic.\footnote{For a more abstract proof, note that all morphisms involved are deterministic, and it is therefore enough to show that all single-output marginals of the equation hold. We leave the details to the reader.}
\end{proof}

The following other sequence of auxiliary morphisms will describe the transition kernels of the filter process.

\begin{defn}\label{defn:filtTrans}
	For $t \in \left[n\right]$, let $\filtTrans{t}$ be the morphism
	\[
		\tikzfig{hnBoxDiag} \qquad \coloneqq \qquad \tikzfig{filtTransFig}
	\]
	For $t=0$, we follow the convention $X_{-1}\coloneqq \monUnit$, so that $\filtTrans{0} = \update{0}^{\sharp} g_0 f_0$.
\end{defn}

\begin{rmk}
	Intuitively, $\filtTrans{t}$ takes a distribution on $X_{t-1}$, samples from it and determines a sample of the observation in $Y_t$, and then applies a Bayesian updating to the original distribution with respect to this observation.
	In $\finstoch$, our $h_t^\sharp$ seems to match up with the ``abstract hidden Markov models'' of McIver et al.~described at~\cite[Section~3.B]{mciver2019monadic} and~\cite[Section~14.3]{alvim2020quantitative}.
\end{rmk}

Before proceeding, it will be helpful to note that the process of observations can be generated recursively using the Bayes filter in the following sense.

\begin{lem}\label{lem:ObsJoint}
	For all $t \in \left[n\right]$, we have
	\[
		\tikzfig{ObsJoint} \qquad = \qquad \tikzfig{ObsJointTrans}
	\]
\end{lem}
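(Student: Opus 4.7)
The plan is to derive the claim by combining the factorization~\eqref{eqn:Ch_XY_stringDiag} of the hidden Markov model with the defining property~\eqref{eqn:BayesFilterDefnMarginal} of the Bayes filter, applied at time $t-1$.

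First, I would start with the LHS, which is $p^Y_{[t]}$, the $Y_{[t]}$-marginal of the HMM. Expanding $p$ via~\eqref{eqn:Ch_XY_stringDiag} and marginalizing out every $X_i$ with $i \notin \{t-1,t\}$, the resulting diagram factors as: first, the joint $p^{Y_{[t-1]},X_{t-1}}_{[t-1]}$ of $(Y_0,\dots,Y_{t-1},X_{t-1})$; then a copy of the $X_{t-1}$ wire followed by $f_t$ and $g_t$ to produce a $Y_t$ output; and finally marginalization of the $X_{t-1}$ and $X_t$ wires. The first marginalization (of $X_t$) simply absorbs into the composition $g_t \circ f_t : X_{t-1} \to Y_t$, so I can drop the copy of $X_{t-1}$ and get $g_t f_t$ applied directly to the $X_{t-1}$ output of $p^{Y_{[t-1]},X_{t-1}}_{[t-1]}$, with the $X_{t-1}$ output then discarded.

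Second, I would invoke~\eqref{eqn:BayesFilterDefnMarginal} at time $t-1$, which gives precisely
\[
  p^{Y_{[t-1]},X_{t-1}}_{[t-1]} \;=\; \bigl(\id_{Y_{[t-1]}} \otimes \filtB{t-1}\bigr) \circ \Mcopy_{Y_{[t-1]}} \circ p^Y_{[t-1]}.
\]
Substituting into the expression from the previous step yields $p^Y_{[t-1]}$, followed by a copy of the $Y_{[t-1]}$ wires, with one copy fed as input into $\filtB{t-1}$ to produce $X_{t-1}$, and then $g_t \circ f_t$ applied to $X_{t-1}$ to give $Y_t$ --- which is exactly the RHS.

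The main thing to watch out for is wire bookkeeping: the $Y_{[t-1]}$ outputs must be retained in the final state while simultaneously being fed into $\filtB{t-1}$, and the $X_{t-1}$ wire (not needed in the output) must be routed through $f_t, g_t$ to produce the new $Y_t$ output. There is no issue with almost-sure equality, because the defining equation~\eqref{eqn:BayesFilterDefnMarginal} for a fixed choice of $\filtB{t-1}$ is a strict equality of morphisms, and all subsequent manipulations are equalities in $\catC$. The argument is therefore essentially a string-diagram computation, with no deep ingredients beyond the HMM factorization and the definition of the Bayes filter.
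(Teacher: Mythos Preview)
Your proposal is correct and follows essentially the same approach as the paper. The paper's proof is simply more terse: it observes that post-composing the defining equation~\eqref{eqn:BayesFilterDefnMarginal} of $\filtB{t-1}$ on the $X_{t-1}$ output with $g_t f_t$ already yields the claim, since by the HMM factorization the left-hand side becomes $p^Y_{[t]}$; your two steps (first use~\eqref{eqn:Ch_XY_stringDiag} to reduce $p^Y_{[t]}$ to the $(Y_{[t-1]},X_{t-1})$-marginal post-composed with $g_t f_t$, then substitute~\eqref{eqn:BayesFilterDefnMarginal}) spell out exactly this computation.
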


\begin{proof}
	At $t = 0$, this amounts to $p^Y_{\left[0\right]} = g_0 f_0$, which is obvious.
	And with $t + 1$ in place of $t$, the claim follows upon composing the definition of the Bayes filter~\eqref{eqn:BayesFilterDefn} with $g_{t+1} f_{t+1}$.
\end{proof}

The following result now shows that the filter process is indeed a Markov chain with initial distribution $\filtTrans{0}$ and transition kernels $\filtTrans{t}$.

\begin{thm}\label{prop:ChBMarkov} For all $t\in \left[n\right]$,
	\[
		\tikzfig{ChBStateDiag} \qquad = \qquad \tikzfig{Ch_Bn_stringDiag}
	\]
\end{thm}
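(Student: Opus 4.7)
The plan is to prove \Cref{prop:ChBMarkov} by induction on $t \in [n]$. The base case is essentially definitional, while the inductive step combines the recursive description of $\lambda_t$ (\Cref{defn:lambda}), the generative recursion for the observation process in \Cref{lem:ObsJoint}, and the determinism-flavoured compatibility statement \Cref{lem:lambdaBF}, and then identifies the resulting tail with the transition kernel $\filtTrans{t}$ of \Cref{defn:filtTrans}.

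For the base case $t=0$: by \Cref{lem:ObsJoint}, $p^Y_{[0]} = g_0 f_0$; on the other hand, $\lambda_0 = \filtB{0}^{\sharp} = \update{0}^{\sharp}$, so the filter process reduces to $\update{0}^{\sharp} g_0 f_0 = \filtTrans{0}$, which is the length-one Markov chain with initial distribution $\filtTrans{0}$.

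For the inductive step, assuming the result for $t-1$, I would first expand the outermost $\lambda_t$ of the left-hand side via \Cref{defn:lambda}: this exhibits $\lambda_{t-1}$ acting on the wires $Y_0,\dots,Y_{t-1}$, the final $\Dist X_{t-1}$ output of $\lambda_{t-1}$ copied, and $\update{t}^{\sharp}$ applied to one of the copies together with $Y_t$. Simultaneously I would expand $p^Y_{[t]}$ via \Cref{lem:ObsJoint}, exposing $p^Y_{[t-1]}$ followed by a branch $g_t f_t \filtB{t-1}$ that generates $y_t$ from the earlier observations. The next move is to invoke \Cref{lem:lambdaBF}, which rewrites $\filtB{t-1}$ on the observation wires as sampling the last $\Dist X_{t-1}$ slot of $\lambda_{t-1}$; this routes the $y_t$-generation through the very same $\lambda_{t-1}$ that is already present. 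After a little diagrammatic tidying, exactly one copy of $\lambda_{t-1} \circ p^Y_{[t-1]}$ appears, followed by post-processing of its last $\Dist X_{t-1}$ output that (i) copies it, (ii) samples one copy and pushes it through $f_t g_t$ to yield $y_t$, and (iii) feeds the other copy together with $y_t$ into $\update{t}^{\sharp}$ to produce $\Dist X_t$. By \Cref{defn:filtTrans}, steps (i)--(iii) are precisely $\filtTrans{t}$.

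To finish, I would apply the inductive hypothesis to replace $\lambda_{t-1} \circ p^Y_{[t-1]}$ with the Markov chain $\filtTrans{0},\filtTrans{1},\dots,\filtTrans{t-1}$; appending $\filtTrans{t}$ to its last $\Dist X_{t-1}$ output yields exactly the right-hand side. The main obstacle I anticipate is the bookkeeping when merging the expansions of $\lambda_t$ and of $p^Y_{[t]}$, together with a careful check that the $\as{p^Y_{[t]}}$-equalities inherent in \Cref{cor:BayesUpdateDet} and the non-uniqueness of $\filtB{t-1}^{\sharp}$ become strict equalities once both sides are precomposed with $p^Y_{[t-1]}$; this is automatic because the arbitrariness of $\filtB{t-1}^{\sharp}$ only lives on observation sequences of probability zero under $p^Y_{[t-1]}$.
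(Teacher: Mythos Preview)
Your proposal is correct and follows essentially the same route as the paper: induction on $t$, the base case reducing to $\filtTrans{0} = \update{0}^{\sharp} g_0 f_0$, and the inductive step combining the recursion for $\lambda_t$, \Cref{lem:ObsJoint}, \Cref{lem:lambdaBF}, and the definition of $\filtTrans{t}$ before invoking the hypothesis. The paper's only additional ingredient is an explicit appeal to \Cref{cor:BayesUpdateDet} in the diagram rewrite, which is exactly the $\as{}$-issue you flag at the end; also note that after applying \Cref{lem:lambdaBF} the last $\Dist X_{t-1}$ wire must be copied \emph{three} ways (one for the output $\Dist X_{t-1}$ of $\filtBP{t}$, two for the internals of $\filtTrans{t}$), whereas your prose only mentions two, but this is a bookkeeping slip rather than a gap.
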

\begin{proof}
	We proceed by induction on $t$.
	The base case $t = 0$ asserts $\filtB{0}^{\sharp} p^Y_0 = \filtTrans{0}$, which holds because of $\update{0} = \filtB{0}$ and $p^Y_0 = g_0 f_0$.

	For the induction step, we consider $t \ge 1$ such that the desired equation holds for $\filtBP{t-1}$, and we then show that it holds for $\filtBP{t}$.
	Expanding the definitions gives
	\[
		\tikzfig{ChBStateDiag} \qquad = \qquad \tikzfig{ChBExp} \qquad = \qquad \tikzfig{ChBAltExp}
	\]
	Applying \Cref{lem:ObsJoint}, this evaluates further to
	\[
		\qquad = \qquad \tikzfig{ChBUpdateElimd} \qquad = \qquad \tikzfig{ChBSplitLambdaPacked}
	\]
	where the second equation uses the recursion formula of \Cref{cor:BayesUpdateDet}.
	By the fact that $\filtB{t-1}^{\sharp}$ is the deterministic counterpart of $\filtB{t}$, we can also write this as
	\[
		\quad = \quad \tikzfig{ChBSplitBSharpPushed} \quad = \quad \tikzfig{ChBSplitLambdaTrans} \quad = \quad \tikzfig{ChBTrans}
	\]
	where the second equation holds by \Cref{lem:lambdaBF} and the definition of $\filtTrans{t}$.
	The claim now follows straightforwardly by the induction hypothesis.
\end{proof}

\begin{rmk}
	This result may have some significance in the context of information security.
	Indeed, if the observations in the hidden Markov model are the observations of an adversary, then it is of interest to make statements about the adversary's guess of the hidden state and how it evolves in time.
	The fact that the filter process is a Markov chain indicates that given the adversary's guess at some time $t$, learning about their guess at an earlier time does not help with predicting their guess at a later time.
\end{rmk}

\begin{exmp}
	In $\finsetmulti$, each distribution object $P X_t$ is the set of nonempty subsets of $X_t$.
	Indeed evaluating~\eqref{eq:un_def} in $\finsetmulti$ shows that the filter update
	\[
		\update{t}^\sharp \: \colon \: PX_{t-1} \otimes Y_t \longrightarrow PX_t
	\]
	is the map which takes a subset of $X_{t-1}$ and an observation $y_t$, applies the transition $f_t$ to determine all states $x_t$ consistent with the given set at time $t-1$ and then intersects with the set of states consistent with the observed value $y_t$.
	This is effectively what we already did in \Cref{exmp:filter_finsetmulti} to compute the possibilistic filter.

	Continuing on from that example, let us see what the filter process and its Markov property amount to in that example.
	At $t = 0$, we have seen that the filter deterministically outputs
	$
  \filtIB{0}(\bot) = (1,0,1)^t. 
	$
	At time $t = 1$, the possible filter outputs are
	 $
		\filtIB{1}(\bot, \bot) = (1,1,0)^t
    $ and $
		\filtIB{1}(\bot, \top) = (0,1,0)^t
	  $
	depending on the value of the second observation.
	At time $t = 2$, we similarly get one of
	$
		\filtIB{2}(\bot, \bot, \bot) = (1,1,0)^t,
		\filtIB{2}(\bot, \bot, \top) = (0,1,1)^t
    $, and $
		\filtIB{2}(\bot, \top, \top) = (0,0,1)^t.
	$
	Continuing like this, a simple induction argument shows that the filter process is given by the following possibilistic Markov chain, or nondeterministic automaton:
\begin{center}
\begin{tikzpicture}[->,>= stealth', shorten >=2pt, line width=0.5 pt, node distance=0.5cm and 1cm, oval/.style={ellipse,draw}]
\node [oval,draw] (zero) {$(1,0,0)^t$};
\node [oval , draw ] (one) [ right = of zero ] {$(1,1,0)^t$};
\node [oval , draw ] (two) [ right = of one ] {$(0,1,1)^t$};
\node [oval , draw ] (three) [ below = of one ] {$(0,1,0)^t$};
\node [oval , draw ] (four) [ below = of two ] {$(0,0,1)^t$};
\path (zero) edge node [ above ] {$\bot$} (one) ;
\path (one) edge node [ above ] {$\top$} (two) ;
\path (one) edge [ loop above ] node {$\bot$} (one) ;
\path (two) edge [ loop right ] node {$\bot, \top$} (two) ;
\path (zero) edge node [ below left ] {$\top$} (three) ;
\path (three) edge node [ above ] {$\top$} (four) ;
\path (four) edge [ loop right ] node {$\top$} (four) ;
\end{tikzpicture}
\end{center}
\end{exmp}
Here, the labels indicate which new observations trigger which transition.
$\bot$ does not appear on any transition out of the bottom two states, since when we are at $b$ or $c$ with certainty, then we must be at $c$ in the next step and will necessarily observe $\top$.

\section{Smoothing in Markov categories}
\label{sec:smoothing}

While the Bayes filter tries to infers the most recent state on $X_t$ from a sequence of observations up to time $t$, it is sometimes desirable to go back and make inferences about the hidden state at an earlier time.
So in this section, we consider observations up to time $n$ and the problem of inferring the hidden state at a time $t \in \left[n\right]$.

Some of the material in this section will be similar to that in \cref{sec:BayesFilter}, so we will be brief in places.
Throughout, we work in a Markov category $\catC$ with conditionals and consider a hidden Markov model $p \colon \monUnit \to \left(Y \otimes X\right)_{\left[n\right]}$ as in \cref{defn:HiddenMarkovModel}.

\subsection{The Bayes smoother}

\begin{defn}\label{defn:Smoother}
	The \newterm{Bayes smoother} at time $t$ is the conditional 
	\begin{equation}
		\label{eqn:BayesSmootherDefn}
		\tikzfig{SmootherDef}
	\end{equation}
\end{defn}

Working directly with this definition has similar practical limitations as with the Bayes filter: the size of the space that one conditions on grows exponentially with the number of observations $n$.
Therefore, we again develop recursive formulas that make it easier to work with.

\subsubsection{The forward-backward algorithm}

Constructing the entire joint distribution $p$ only to marginalize over all but one of the hidden state spaces is wasteful.
Using the structure of the hidden Markov model, we will see that this can indeed be avoided by factoring the relevant marginal into two separate parts and computing each part separately, roughly like so:
\begin{equation}
	\label{eqn:smoother-factorization}
	\tikzfig{smoother-chain}
\end{equation}
We will prove recursive formulas for each of the two parts.
This yields the \newterm{forward algorithm} for the left part and the \newterm{backward algorithm} for the right part.
These morphisms are then composed and conditioned on $Y_0, \dots, Y_n$, a procedure called \newterm{forward-backward algorithm}.
While these procedures are often formulated concretely in $\finstoch$~\cite{rabiner1989tutorial}, our categorical generalization immediately specializes also to a measure-theoretic formulation in $\borelstoch$ and to a possibilistic formulation in $\finsetmulti$.
We start with the forward algorithm.

\begin{notn}
	We write $\alpha_t \colon \monUnit \to Y_{\left[t\right]} \otimes X_t$ for the marginal given by
	\begin{equation}
		\label{eqn:naive-forward}
		\tikzfig{naive-forward}
	\end{equation}
\end{notn}

\begin{rmk}
	By definition, the Bayes filter $\filtB{t}$ is the conditional of $\alpha_t$ on $Y_{\left[t\right]}$.
\end{rmk}

The na\"{\i}ve approach to computing $\alpha_t$ in practice would first calculate the entire $p_{\left[t\right]} \colon \monUnit \to \left(Y \otimes X\right)_{\left[t\right]}$ and then marginalize out $X_{\left[t-1\right]}$.
But already in $\finstoch$, this would involve many redundant calculations, resulting in a computation that gets intractable in practice.
The following more workable recursion formula is clear from the definition.

\begin{lem}[The forward algorithm]\label{obs:forward-algorithm}
	The morphisms $\alpha_t$ satisfy the recursion formula 
	\begin{equation}
		\label{eqn:forward-recursion}
		\tikzfig{forward-recursion}
	\end{equation}
	for all $t \in \left[n\right]$, starting with $\alpha_{-1} \coloneqq \id_{\monUnit}$.
\end{lem}

Similarly, the goal of the backward algorithm is to compute the right part of~\eqref{eqn:smoother-factorization}, traditionally denoted $\beta$~\cite[Section~III]{rabiner1989tutorial}.

\begin{notn}
	For a hidden Markov model $p$ and $t \in \left[n\right]$, we write $\beta_t \colon X_t \to Y_{(t,n]}$ for 
	\begin{equation}
		\label{eqn:naive-backward}
		\tikzfig{naive-backward}
	\end{equation}
\end{notn}

Again the following recursion formula is clear from the definition.

\begin{lem}[The backward algorithm]\label{obs:backward-algorithm}
	The morphisms $\beta_t$ satisfy the backwards recursion formula 
	\begin{equation}
		\label{eqn:backward-recursion}
		\tikzfig{backward-recursion}
	\end{equation}
	for all $t \in \left[n\right]$, starting with $\beta_{n+1} \coloneqq \id_{\monUnit}$ where $X_{n+1} \coloneqq Y_{n+1} \coloneqq \monUnit$.
\end{lem}



\begin{rmk}
	\label{rmk:inconsistencies}
	There are some differences in the literature concerning the definitions of the forward and the forward-backward algorithm, and in particular on whether some conditioning on observations is already considered part of the algorithm or not.
	Common to most prior treatments is that they take place in $\finstoch$ only.\footnote{One exception is Murphy~\cite[Section 18.3.2.3]{murphy2012machine}, who does discuss an equivalent for continuous variables with density.}
	Otherwise accounts differ in the details.
	For example, Rabiner~\cite{rabiner1989tutorial} defines $\alpha_t$ as we do, and conditioning on observations is only done in subsequent steps, such as trying to estimate the most likely hidden state (known as the \emph{Baum--Welch algorithm}).
    Murphy~\cite[Sections 17.4.2, 17.4.3]{murphy2012machine} instead defines $\alpha_t$ to coincide with our Bayes filter $\filtB{t}$, and his description of the forward algorithm is more akin to our instantiated Bayes filter in $\finstoch$, while his version of the forward-backward algorithm corresponds to an instantiated version of our \eqref{eqn:smoother-filter} below in $\finstoch$.\footnote{See \Cref{subsec:instantiated-smoother}.}

	We find it most clear to define $\alpha_t$ and $\beta_t$ as quantities that are not conditioned, and we reserve notation such as $\filtB{t}$ and $\smoothB{n}{t}$ for morphisms that have been conditioned on observations and thus perform Bayesian inference.
\end{rmk}

\begin{lem}
	\label{lem:smoother-filter}
	The Bayes smoother can also be expressed as
	\begin{equation}
		\label{eqn:smoother-filter}
		\tikzfig{forward-backward} \quad =_{\as{\Ch{Y}{t}}} \; \tikzfig{filter-backward}
	\end{equation}
\end{lem}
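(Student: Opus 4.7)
The strategy is to rewrite $\alpha_t$ appearing in the dashed box of~\eqref{eqn:forward-backward} using the defining property of the Bayes filter, and then collapse part of the resulting conditioning. By~\eqref{eqn:BayesFilterDefnMarginal}, $\alpha_t$ is (for any chosen $\filtB{t}$) literally equal to the state obtained by taking the marginal $\Ch{Y}{t}$ on $Y_{\left[t\right]}$, copying that output, and routing one branch through $\filtB{t}$ to produce $X_t$ while keeping the other branch as the $Y_{\left[t\right]}$ output. Substituting this decomposition into~\eqref{eqn:forward-backward} expresses the joint on $Y_{\left[n\right]}\otimes X_t$ inside the dashed box in the factored form ``$\Ch{Y}{t}$, then copy $Y_{\left[t\right]}$, then $\filtB{t}$, then copy $X_t$, with one branch fed into $\beta_t$.''

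Next, apply the iterated-conditioning property \cref{prop:ConditioningCoherence}\ref{cond:CondCoherenceIter} to split the outer conditional on $Y_{\left[n\right]} = Y_{\left[t\right]}\otimes Y_{(t,n]}$ into a conditional on $Y_{\left[t\right]}$ followed by a conditional on $Y_{(t,n]}$. The first of these conditionals is essentially trivial for the factored expression: marginalizing out $X_t$ and $Y_{(t,n]}$ from the joint, via the counit identities~\eqref{eq:delBox} and~\eqref{eq:delCopy}, leaves precisely $\Ch{Y}{t}$. Hence the joint is already displayed in the form ``marginal $\Ch{Y}{t}$ with a kernel $K\colon Y_{\left[t\right]}\to X_t\otimes Y_{(t,n]}$ plugged in through the copy,'' where $K$ is built from $\filtB{t}$, a copy of $X_t$, and $\beta_t$ on one branch. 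By the defining property of conditionals (\cref{defn:Conditionals}), the conditional on $Y_{\left[t\right]}$ of such a factored joint is exactly this kernel $K$. Conditioning $K$ further on its $Y_{(t,n]}$ output then yields precisely the right-hand side of~\eqref{eqn:smoother-filter}.

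The only nontrivial point is the collapsing of the $Y_{\left[t\right]}$ part of the conditioning, which works because substituting the Bayes-filter form of $\alpha_t$ has already exposed $\Ch{Y}{t}$ as the $Y_{\left[t\right]}$-marginal of the joint; everything else is string-diagram bookkeeping. As in the rest of the paper, both sides are determined only up to $\as{\Ch{Y}{n}}$ equality, and the coherence of conditionals ensures that the two expressions agree in this sense.
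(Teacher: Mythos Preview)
Your proposal is correct and follows essentially the same approach as the paper. The paper first applies \cref{prop:ConditioningCoherence} to split the conditional in~\eqref{eqn:forward-backward} into a $Y_{\left[t\right]}$-part and a $Y_{(t,n]}$-part, and then recognizes the inner conditional of $\alpha_t$ on $Y_{\left[t\right]}$ as $\filtB{t}$ by definition; you instead first substitute the Bayes-filter decomposition of $\alpha_t$ and then observe that the $Y_{\left[t\right]}$-conditioning collapses, which is the same identification in the opposite order.
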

\begin{proof}
	We have
	\begin{equation}
		\label{eqn:smoother-filter-proof}
		\tikzfig{forward-backward} \;\; \tikzfig{filter-backward-proof}
	\end{equation}
	where the first equation follows by plugging in definitions and the second by \Cref{prop:ConditioningCoherence}.
	Now the inner dashed box is precisely $\filtB{t}$ by definition.
\end{proof}

The conditioning on $Y_{(t,n]}$ in~\eqref{eqn:smoother-filter} may still pose challenges if $n - t$ is not small.
A solution to this has been explored in $\gauss$ called the \emph{two-filter smoother}~\cite{kitagawa1994two,Sarkka}, but we instead explore an alternative smoothing algorithm next.

\subsubsection{Fixed-interval smoothing}
\label{ssec:fixed-interval-smoothing}

Here, we develop the categorical version of \newterm{fixed-interval smoothing}, an existing recursion formula for the Bayes smoothers $\smoothB{n}{t}$ in terms of the Bayes filters $\filtB{t}$~\cite[Section 8.6]{bar2004estimation}.
Its name refers to the time interval for the observation sequence being fixed.
As with the recursion formulas for the Bayes filter from \Cref{sec:BayesFilter}, the complexity reduces to linear in $n$ and $t$.

\begin{prop}[Fixed-interval smoother]
	\label{prop:fixed-interval}
	The following backwards recursion formula for $\smoothB{n}{t}$ holds true for all $t\in \left[0,n\right]$:
	\begin{equation}
		\label{eqn:fixed-interval-step}
		\tikzfig{fixed-interval-step}
	\end{equation}
	where $X_{n+1} = \monUnit$, meaning that $\smoothB{n}{n+1} = \discard_{Y_{\left[n\right]}}$ and $f_{n+1} = \discard_{X_n}$.
\end{prop}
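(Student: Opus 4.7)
The plan is to verify that the proposed right-hand side of~\eqref{eqn:fixed-interval-step} satisfies the defining equation of the Bayes smoother (\Cref{defn:Smoother}), and then invoke uniqueness of conditionals up to $\as{p^Y}$ equality (\Cref{rmk:ConditionalASUniqueness}). The argument proceeds by backward induction on $t$, starting with the base case at $t=n$, where the conventions $X_{n+1} = \monUnit$, $\smoothB{n}{n+1} = \discard_{Y_{\left[n\right]}}$, and $f_{n+1} = \discard_{X_n}$ collapse~\eqref{eqn:fixed-interval-step} to the identification $\smoothB{n}{n} \,=_{\as{p^Y}}\, \filtB{n}$. This identification holds because marginalizing out $X_{\left[n-1\right]}$ from the hidden Markov model~\eqref{eqn:Ch_XY_stringDiag} produces precisely the joint on $Y_{\left[n\right]} \otimes X_n$ that defines $\filtB{n}$ in~\eqref{eqn:BayesFilterDefn}.

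For the inductive step, I would expand the joint $(X_t, X_{t+1}, Y_{\left[n\right]})$-marginal of $p$ using the HMM factorization from \Cref{prop:MarkovProperties}\ref{cond:rep}. The key observation is that this factorization splits cleanly into a ``past'' subdiagram (times $\le t+1$) and a ``future'' subdiagram (times $> t+1$). Invoking the global Markov property \Cref{prop:MarkovProperties}\ref{cond:global} with $R = \{t\}$, $S = \{t+1\}$, and the future observation indices forming $T$, one obtains the conditional independence $X_t \perp Y_{\left(t,n\right]} \mid X_{t+1}, Y_{\left[t\right]}$. This collapses the conditional of $X_t$ given $(X_{t+1}, Y_{\left[n\right]})$ to the conditional of $X_t$ given only $(X_{t+1}, Y_{\left[t\right]})$, which is exactly the Bayesian inverse of $f_{t+1}$ taken with respect to the filter output $\filtB{t}$ viewed as a prior on $X_t$ parametrized by $Y_{\left[t\right]}$.

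To finish, I would marginalize over $X_{t+1}$ by composing the above conditional with $\smoothB{n}{t+1}$ (which by the inductive hypothesis is the correct Bayes smoother at time $t+1$), using \Cref{prop:ConditioningCoherence} to restructure the nested conditionals coherently. The resulting morphism satisfies the defining equation of $\smoothB{n}{t}$, so the uniqueness of conditionals concludes the induction. One then checks by inspection that the string diagram assembled in this way agrees with the right-hand side of~\eqref{eqn:fixed-interval-step}.

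The main obstacle will be making the string-diagrammatic appearance of the Bayesian inverse explicit and matching it to the diagram in~\eqref{eqn:fixed-interval-step}: the Bayesian inverse of $f_{t+1}$ with respect to the filter prior $\filtB{t}$ depends on $Y_{\left[t\right]}$, and the parametric dependence must be routed correctly through copy maps while respecting that equalities only hold $\as{p^Y}$. Careful bookkeeping of which subdiagrams are deterministic versus which require the $\as{}$-qualifier, in the spirit of \Cref{rmk:ConstFromASE}, is where the argument is most delicate.
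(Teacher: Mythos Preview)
Your approach is essentially correct but takes a different route from the paper. The paper does \emph{not} argue by induction: it starts from the filter--backward decomposition of \Cref{lem:smoother-filter}, unrolls $\beta_t$ one step via \Cref{prop:backward-algorithm}, inserts the parametric Bayesian inverse of $f_{t+1}$ with respect to $\filtB{t}$, recognizes the resulting inner conditional as $\filtB{t+1}$ via \Cref{prop:B_n}, and then re-applies \Cref{lem:smoother-filter} at time $t+1$ to identify $\smoothB{n}{t+1}$. Your argument instead goes straight from the global Markov property to the conditional independence $X_t \perp Y_{(t,n]} \mid X_{t+1}, Y_{\left[t\right]}$, which is the probabilistic heart of the recursion, and then assembles $\smoothB{n}{t}$ by iterated conditioning. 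This is arguably more conceptual---it makes transparent why the smoother recursion exists at all---while the paper's proof has the advantage of reusing the $\alpha/\beta$ machinery already built and avoiding any direct appeal to Markov properties.

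One point to clean up: your ``inductive hypothesis'' is never actually used. The morphism $\smoothB{n}{t+1}$ is the Bayes smoother at time $t+1$ \emph{by definition}, not by induction; what you would need from an inductive hypothesis is the recursion formula \eqref{eqn:fixed-interval-step} at $t+1$, and you never invoke that. So your argument is really a direct proof at each $t$, just like the paper's, and the backward-induction framing should be dropped. Also, when you cite the global Markov property for the needed independence, be careful that the betweenness conditions in \Cref{prop:MarkovProperties}\ref{cond:global} as literally stated cover only $R$--$T$ and $U$--$W$ pairs; the cross-pair separation you need ($X_t$ from $Y_{(t,n]}$) is certainly a consequence of the underlying $d$-separation criterion, but you may want to invoke that criterion directly rather than the specialized statement.
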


More explicitly, the initial $t = n$ step of the recursion constructs $\smoothB{n}{n} = \filtB{n}$, which can alternatively be taken to be the start of the recursion.

\begin{proof}
	The initial condition $t = n$ holds as $\smoothB{n}{n} = \filtB{n}$ by definition.
	For $t<n$, we combine \Cref{lem:smoother-filter} and the backwards procedure from \Cref{obs:backward-algorithm} to get
	\[
		\tikzfig{fixed-interval-backward-rewrite} \: =_{\as{\Ch{Y}{n}}} \; \tikzfig{fixed-interval-cond-rewrite}
	\]
	where the second step holds by the definition of conditionals.
	Applying \Cref{prop:ConditioningCoherence} together with the fact that copy is deterministic, we can further evaluate this to
	\[
		\hspace{-4mm} \tikzfig{fixed-interval-filt-simplified} \hspace{-2mm} =_{\as{\Ch{Y}{n}}} \hspace{-2mm} \tikzfig{fixed-interval-filt-expanded} \hspace{-2mm} =_{\as{\Ch{Y}{n}}} \tikzfig{fixed-interval-next-time-rewrite}
	\]
	and the bottom right subdiagram is $\smoothB{n}{t+1}$ by \Cref{lem:smoother-filter} again.
\end{proof}

\subsection{The instantiated Bayes smoother}\label{subsec:instantiated-smoother}

As with the Bayes filter, working with the uninstantiated smoother $\smoothB{n}{t}$ can be difficult already because its domain grows exponentially with $n$.
Here, we introduce the \newterm{instantiated Bayes smoother} by specializing the Bayes smoother to a \emph{fixed} sequence of observations, much in the same fashion as we had done for the Bayes filter in \Cref{sec:instantiatedBayesFilter}.

\begin{defn}
	Given a sequence of deterministic morphisms
	\[
		y_0 \colon \monUnit \to Y_0, \qquad y_1 \colon \monUnit \to Y_1, \qquad \ldots, \qquad y_n \colon \monUnit \to Y_n
	\]
	which we abbreviate by $\mathbf{y}_{\left[n\right]}$, the \newterm{instantiated Bayes smoother} is
	\[
		\tikzfig{instantBayesSmoother}
	\]
\end{defn}

From here we can easily construct an instantiated version of the fixed-interval smoothing recursion, which in $\gauss$ recovers the Rauch-Tung-Striebel smoother (\Cref{ssec:rts-smoother}).

\begin{prop}[Instantiated fixed-interval smoother]
	\label{prop:instantiated-fixed-interval}
	The instantiated Bayes smoother can be computed through the following backwards recursion:
	\begin{equation}
		\label{eqn:instantiated-fixed-interval-step}
		\tikzfig{instantiated-fixed-interval-step}
	\end{equation}
	for all $t\in [n]$, where the recursion starts at $\smoothIB{n}{n}{y} = \filtIB{n}\left(\mathbf{y}_{\left[n\right]}\right)$.
\end{prop}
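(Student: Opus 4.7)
The plan is to derive the instantiated recursion by precomposing the uninstantiated recursion of \cref{prop:fixed-interval} with the deterministic observation states $\mathbf{y}_{[n]}$ and propagating those inputs through the resulting string diagram. This parallels the way \cref{prop:inst_BF} was obtained from \cref{prop:B_n}, and the main technical tool is again \cref{lem:det_conditional}, which lets conditionals interact cleanly with deterministic inputs.

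Concretely, I would start from the definitional identity $\smoothIB{n}{t}{y} = \smoothB{n}{t} \circ \mathbf{y}_{[n]}$, apply \cref{prop:fixed-interval} to rewrite $\smoothB{n}{t}$ in terms of $\smoothB{n}{t+1}$, the parametric Bayesian inverse $f_{t+1,\filtB{t}}^{\dagger}$, and $\filtB{t}$, and then distribute the deterministic observations across this diagram. The observations $y_{t+1},\ldots,y_n$ together with $\smoothB{n}{t+1}$ assemble into $\smoothIB{n}{t+1}{y}$ directly from the definition, and the observations $y_0,\ldots,y_t$ fed into $\filtB{t}$ give $\filtIB{t}(\mathbf{y}_{[t]})$ by the definition of the instantiated Bayes filter.

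The step requiring the most care is the treatment of the parametric Bayesian inverse $f_{t+1,\filtB{t}}^{\dagger}$, whose parameter depends on the observations $y_0,\ldots,y_t$ through the filter $\filtB{t}$. Here I would combine the definition of the parametric Bayesian inverse (\cref{defn:GeneralizedBayesianInversion}) with \cref{lem:det_conditional} to argue that instantiating the parameter at the deterministic $\mathbf{y}_{[t]}$ converts $f_{t+1,\filtB{t}}^{\dagger}$ into the non-parametric Bayesian inverse $f_{t+1,\filtIB{t}(\mathbf{y}_{[t]})}^{\dagger}$ that appears on the right-hand side of \eqref{eqn:instantiated-fixed-interval-step}. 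Intuitively, fixing the observations fixes the prior used in the inversion; formally, this is the analogue of how the parametric conditional in the proof of \cref{prop:inst_BF} collapses to an ordinary Bayesian inverse once the observations are pinned down. Everything else is essentially bookkeeping of string-diagram topology.

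The base case is immediate: $\smoothIB{n}{n}{y} = \smoothB{n}{n} \circ \mathbf{y}_{[n]} = \filtB{n} \circ \mathbf{y}_{[n]} = \filtIB{n}(\mathbf{y}_{[n]})$, using the initial condition of \cref{prop:fixed-interval} and the definition of the instantiated Bayes filter. As with the other instantiated results in the paper, one should note that the choice of conditional (hence of instantiated smoother) at each step is only well-defined up to \as{} equality with respect to the observation distribution, so the recursion should be read as producing \emph{a} valid version of $\smoothIB{n}{t}{y}$ rather than a strictly unique one.
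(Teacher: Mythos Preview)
Your proposal is correct and matches the paper's approach exactly: the paper omits the proof, noting only that it is straightforward and parallels \cref{prop:inst_BF} by applying \cref{defn:DetMor} and \cref{lem:det_conditional} to \cref{prop:fixed-interval}, which is precisely what you describe.
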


\begin{proof}
	Straightforward from \Cref{prop:fixed-interval} and \cref{prop:ConditioningCoherence}\ref{lem:det_conditional}.
\end{proof}

\subsection{Examples of the instantiated Bayes smoother}

We briefly consider what form the forward-backward algorithm and the fixed-interval smoother take in particular Markov categories, limiting our presentation to $\finstoch$ for the former and $\gauss$ for the latter.

\subsubsection{The forward-backward algorithm in \texorpdfstring{$\finstoch$}{FinStoch}}
\label{ssec:ForwardBackwardFinstoch}

Usually taken to start at $t = 0$, the forward procedure is:

\begin{enumerate}
	\item Initialize as $\alpha_0\left(y_0, x_0\right) \coloneqq g_0\left(y_0 \,|\, x_0\right) \, f_0\left(x_0\right)$.
	\item For $t = 1, \ldots, n$, compute
		\begin{equation}
			\alpha_t\left(\mathbf{y}_{\left[t\right]}, x_t\right) = g_t\left(y_t \,|\, x_t\right) \sum_{x_{t-1}} f_t\left(x_t \,|\, x_{t-1}\right) \, \alpha_{t-1}\left(\mathbf{y}_{\left[t-1\right]}, x_{t-1}\right).
		\end{equation}
\end{enumerate}
Indeed this is the straightforward translation of~\eqref{eqn:forward-recursion} into $\finstoch$.
Similarly, by~\eqref{eqn:backward-recursion} the backward procedure is:
\begin{enumerate}
	\item Initialize as $\beta_n\left(x_n\right) = 1\quad \forall x_n\in X_n$.\footnote{This $\beta_n \colon X_n \to I$ is sometimes described as a ``uniform distribution'', which is technically correct as the only distribution on the singleton set $\monUnit$ is uniform, but we stress that this is not a uniform distribution on $X_n$, already because $X_n$ is the \emph{domain} rather than the codomain of $\beta_n$.}
	\item For $t = n-1, \dots, 0$, compute
		\[
			\beta_t\left(\mathbf{y}_{(t,n]} \,|\, x_t\right) = \sum_{x_{t+1}} g_{t+1}\left(y_{t+1} \,|\, x_{t+1}\right) \, \beta_{t+1}\left(\mathbf{y}_{(t+1,n]} \,|\, x_{t+1}\right) \, f_{t+1}\left(x_{t+1} | x_{t}\right).
		\]
\end{enumerate}

Finally, performing the conditioning with the observed values $\mathbf{y}_{[n]}$ plugged in to~\eqref{eqn:smoother-filter-proof} produces the desired result,
\begin{equation}
	\label{eq:forward-backward-estimate}
	\smoothIB{n}{t}{y}\left(x_t\right) =
	\frac{\alpha_t\left(\mathbf{y}_{\left[t\right]}, x_t\right)\beta_t\left(\mathbf{y}_{(t,n]} \,|\, x_t\right)}
	{\sum_{x_t'}\alpha_t\left(\mathbf{y}_{\left[t\right]}, x_t'\right)\beta_t\left(\mathbf{y}_{(t,n]} \,|\, x_t'\right)}
\end{equation}
We leave the instantiation of this algorithm in other Markov categories, like $\borelstoch$, $\gauss$, $\finsetmulti$ and parametric Markov categories, to the reader.

\subsubsection{The RTS smoother as the instantiated Bayesian smoother}
\label{ssec:rts-smoother}

Shortly after the Kalman filter, a Gaussian fixed-interval backwards recursion was developed~\cite{rts-smoother}, now commonly known as the \newterm{Rauch-Tung-Striebel (RTS) smoother} or \newterm{Kalman smoother}~\cite[Section 18.3.2]{murphy2012machine}.
Similar to the Kalman filter in \Cref{ssec:KalmanFilter}, we show that our categorical fixed-interval smoother specializes to the RTS smoother in $\gauss$, slightly generalized to allow for biased noise.

Keeping notation as in the Kalman filter case from \Cref{ssec:KalmanFilter}, we additionally write the parameters for the filter output as
\[
	\filtIB{t}\left(\mathbf{y}_{\left[t\right]}\right) = \mathcal{N}\left(m_t, P_t\right),
\]
noting that we assume these quantities to have been computed already e.g.~through the Kalman filter.
To indicate the relation, let us denote the output parameters of the instantiated smoother with ``hats,'' meaning that
\[
	\smoothIB{n}{t}{y} = \mathcal{N}\left(\hat{m}_t, \hat{P}_t\right).
\]
To determine the recursion formulas for these new parameters, we first write out the joint state inside the dashed box in~\eqref{eqn:instantiated-fixed-interval-step} as
\[
	\begin{pmatrix}
		x_t \\ \tilde{x}_{t+1} 
	\end{pmatrix} = \mathcal{N}(a_t, S_t),
\]
where the parameters
\[
	a_t = \begin{pmatrix}
		m_t \\ \tilde{m}_{t+1} 
		\end{pmatrix} \qquad \text{ and }
		\qquad
	S_t = \begin{pmatrix}
		P_t & P_t A_{t+1}^t \\
		A_{t+1}P_t & \tilde{P}_{t+1} 
		\end{pmatrix}
\]
are determined by
\begin{equation}
	\label{eqn:instantiated-fixed-interval-step-gauss}
	\tilde{m}_{t+1} = A_{t+1} m_{t} + v_{t+1}, \qquad \tilde{P}_{t+1} = A_{t+1} P_{t} A_{t+1}^t + Q_{t+1},
\end{equation}
as in \eqref{eq:gauss_prediction} for the Kalman filter.
The conditional of this joint state is represented by the equation
\[
	\hat{x}_t = C_t \hat{x}_{t+1} + \mathcal{N}\left(\ m_t - C_t \tilde{m}_{t+1}, \ P_t - C_t \tilde{P}_{t+1} C_t^t \right),
\]
where we have introduced the \newterm{optimal smoother gain}
\[
	C_t \coloneqq P_t A_{t+1}^t \tilde{P}_{t+1}^{-}.
\]
Composing this with $\smoothIB{n}{t+1}{y}$, we obtain the recursion formulas that define the RTS smoother,
\begin{equation}
	\label{eq:rts_smoother_result}
	\hat{m}_t = m_t + C_t \left(\hat{m}_{t+1} - \tilde{m}_{t+1}\right), \qquad 
	\hat{P}_t = P_t + C_t \left(\hat{P}_{t+1} - \tilde{P}_{t+1}\right) C_t^t,
\end{equation}
where the tilde quantities are given by~\eqref{eqn:instantiated-fixed-interval-step-gauss}.

\bibliographystyle{abbrvnat}
\bibliography{hmm}

\end{document}